\newtheorem{theorem}{Theorem}
\newtheorem{lemma}[theorem]{Lemma}
\newtheorem{corollary}[theorem]{Corollary}
\newtheorem{proposition}[theorem]{Proposition}
\newtheorem{definition}[theorem]{Definition}
\newtheorem{question}[theorem]{Question}
\newcommand{\Q}{\mathbb Q}
\renewcommand{\r}{\mathrm}
\DeclareMathOperator{\lt}{length}
\DeclareMathOperator{\soc}{socle}
\DeclareMathOperator{\card}{card}
\begin{document}

\begin{center}
\texttt{Comments, corrections, and related references welcomed,
as always!}\\[.5em]
{\TeX}ed \today
\vspace{2em}
\end{center}

\title{Minimal faithful modules over Artinian rings}
\thanks{After publication of this note, updates, errata, related
references etc., if found, will be recorded at
\url{http://math.berkeley.edu/~gbergman/papers/}\,.
}

\subjclass[2010]{Primary: 13E10, 16G10, 16P20,
Secondary: 16D60.}

\author{George M. Bergman}
\address{University of California\\
Berkeley, CA 94720-3840, USA}
\email{gbergman@math.berkeley.edu}

\begin{abstract}
Let $R$ be a left
Artinian ring, and $M$ a faithful left $\!R\!$-module which
is minimal, in the sense that no proper submodule or
proper homomorphic image of $M$ is faithful.

If $R$ is local, and $\soc(R)$ is central in $R,$
we show that $\lt(M/J(R)M) + \lt(\soc(M))\leq \lt(\soc(R))+1,$
strengthening a result of T.\,Gulliksen.

We then consider a ring $R,$ still left Artinian, but
not necessarily local, and not necessarily having central socle.
If $R$ is a finite-dimensional algebra over an
algebraically closed field, we get an inequality similar to the above,
with the length of $\soc(R)$ interpreted as its length as a bimodule,
and the final summand $+1$ replaced
by the Euler characteristic of a bipartite graph determined
by the bimodule structure of $\soc(R).$
That inequality holds, more generally, if, rather than assuming
$k$ algebraically closed, we assume that
$R/J(R)$ is a direct product of full matrix algebras over $k,$
and exclude the case where $k$ has small finite cardinality.
Examples show that the restriction on the cardinality
of $k$ is needed; we do not know whether the other hypotheses
can be significantly weakened.

We end with a section, essentially independent of what
precedes, on faithful modules with only one of these
minimality properties,
i.e., having no faithful proper submodules {\em or} having
no faithful proper homomorphic images.
Here the conclusion is more straightforward:
The length of $M/J(R)M$ in the former case, and of
$\soc(M)$ in the latter, is $\leq\lt(\soc(R))$ (where this
again means length as a bimodule).
We also show that every faithful module over a left Artinian ring
has a faithful submodule with the former minimality condition,
and a faithful factor module with the latter; the proofs are
based on some general results on decompositions of modules.
\end{abstract}

\maketitle

\section{Background and motivation}\label{S.background+}
This paper arose as a tangent to the unpublished
note~\cite{cmtg_mxs}, which examines the
question of which commutative Artinian rings $R$ have the
property that every faithful $\!R\!$-module $M$ has
length greater than or equal to that of $R.$
(If $R$ is a commutative algebra over a field $k,$
it is known that this is true if $R$ can be generated
over $k$ by $2$ elements, but
false for $\!4\!$-generator algebras;
it is an open problem whether it holds for $\!3\!$-generator algebras.
For more on this,
see~\cite{cmtg_mxs} and~\cite[Chapter~5]{O+C+V}.)

In studying that question, it is
natural to focus on faithful modules $M$
no proper factor-modules or submodules of which are faithful.
I obtained some results showing that such $M$ must have
small ``top'' $M/J(R)M$ and ``bottom'' $\soc(M);$
Luchezar Avramov
then pointed me to a 1972 paper of Tor Gulliksen, \cite{TG}, which
obtained a stronger result; I found, in turn, that
Gulliksen's bound could be strengthened,
and that the strengthened result could be applied to
a wider class of rings (which, in particular, need not be commutative).
This will be done in \S\ref{S.cm} below.

Some details:
the relevant result of Gulliksen's paper is that if $R$ is a
commutative local Artinian ring, and $M$ a faithful $\!R\!$-module
no proper submodule or homomorphic image of which is faithful, then
each of the semisimple $\!R\!$-modules
$M/J(R)M$ and $\soc(M)$ has length less than or equal to
that of $\soc(R),$ with at least one of these inequalities
strict unless $M\cong R.$
His proof is, in effect,
a lemma in linear algebra, about bilinear maps
$A\times B\to C$ of finite-dimensional vector spaces over a field,
which have the property that every nonzero element of $A$ acts
nontrivially, but which lose this property both on
restriction to any proper subspace
of $B,$ and on composition with any noninjective map out of $C;$
though he only states it
for the natural map $\soc(R)\times M/J(R)M\to\soc(M)$
of vector spaces over the field $R/J(R).$
In \S\ref{S.cm} we show that,
in the general linear algebra setting, one has the stronger inequality
$\dim(A)\geq\dim(B)+\dim(C)-1.$
We then, like Gulliksen, apply this result
to maps $\soc(R)\times M/J(R)M\to\soc(M);$ here, rather than assuming
the local ring $R$ commutative, it is only
necessary to assume $\soc(R)$ central in $R.$

Subsequent sections obtain inequalities of a similar nature for
modules over not necessarily local Artinian rings $R,$ as sketched
in the Abstract.

\section{Faithful modules over local Artin rings with central socles}\label{S.cm}

Before formulating the promised linear algebra result, let us note
that for finite-dimensional vector spaces $B$ and $C$ over a
field $k,$
to give a linear map $a:B\to C$ is equivalent to giving an
element of $B^*\otimes_k C.$
Hence a $\!k\!$-vector space $A,$ given with a $\!k\!$-bilinear
map $A\times B\to C$
such that every nonzero element of $A$ induces a nonzero map
$B\to C$ is equivalent to a subspace $A\subseteq B^*\otimes_k C.$
This is a more symmetric situation; so we shall formulate the
linear algebra result in that form, with $B^*$ re-named $B.$

As noted in the preceding section, the bilinear maps $A\times B\to C$
of interest are those such that restriction to any proper subspace
of $B,$ or composition with the natural map into any proper
homomorphic image of $C,$ kills the action of some element of $A.$
With $B$
dualized as above, this becomes the condition that
passing to a homomorphic image of either $B$ or $C$ has that effect.
Now a minimal proper homomorphic image of $B$ has the form $B/k b$
for some $b\in B-\{0\},$ so the condition that passage to any
such homomorphic image
kills some element of $A$ says that for each nonzero $b\in B,$ the
space $A$ contains a nonzero element of the form $b\otimes c.$
Likewise, the condition that passing to any proper homomorphic
image $C/k c$ of $C$ kills some element of $A$ means that for each
nonzero $c\in C,$ the space $A$ contains a nonzero element
of the form $b\otimes c.$
This leads to the formulation of the next result.

My original proof required that the field $k$ have cardinality
at least $\max(\dim_k(B),\,\dim_k(C)).$
For the present proof, I am indebted to
Cl\'{e}ment de Seguins Pazzis \cite{overflow}.

\begin{proposition}\label{P.cm_bilin}
Let $k$ be a field, and $B$ and $C$ nonzero finite-dimensional vector
spaces over $k.$
Suppose $A\subseteq B\otimes_k C$ is a subspace such that
\begin{equation}\begin{minipage}[c]{35pc}\label{d.cm_b}
$(\forall\,b\in B-\{0\})\,(\exists\,c\in C-\{0\})\ b\otimes c\in A,$
\end{minipage}\end{equation}
and
\begin{equation}\begin{minipage}[c]{35pc}\label{d.cm_c}
$(\forall\,c\in C-\{0\})\,(\exists\,b\in B-\{0\})\ b\otimes c\in A.$
\end{minipage}\end{equation}
Then
\begin{equation}\begin{minipage}[c]{35pc}\label{d.cm_dim}
$\dim_k(A)\ \geq\ \dim_k(B)+\dim_k(C)-1.$
\end{minipage}\end{equation}
\end{proposition}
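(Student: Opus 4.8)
The plan is to prove \eqref{d.cm_dim} by cutting $B$ down one dimension at a time, the reduction resting on one elementary fact: \emph{if $A_0\subseteq B_0\otimes_k C_0$ is a subspace such that every nonzero $b\in B_0$ satisfies $b\otimes c\in A_0$ for some $c\neq 0$, then $\dim_k A_0\geq\dim_k B_0$.} To see this, pick a basis $b_1,\dots,b_r$ of $B_0$, choose for each $i$ a $c_i\neq 0$ with $b_i\otimes c_i\in A_0$, and check that $b_1\otimes c_1,\dots,b_r\otimes c_r$ are linearly independent: expanding the $c_i$ in a basis of $C_0$ and collecting coordinates, a relation $\sum_i\lambda_i(b_i\otimes c_i)=0$ forces $\lambda_i$ times every coordinate of $c_i$ to vanish, whence $\lambda_i=0$ since $c_i\neq 0$. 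Dually, \eqref{d.cm_c} by itself gives $\dim_k A\geq\dim_k C$; so the real content of \eqref{d.cm_dim} is the way conditions \eqref{d.cm_b} and \eqref{d.cm_c} reinforce each other, i.e.\ the ``$-1$''.

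Granting the elementary fact, it suffices to produce a nonzero $\phi\in B^*$ for which the composite of the inclusion $A\hookrightarrow B\otimes_k C$ with $\phi\otimes\mathrm{id}_C\colon B\otimes_k C\to C$ is surjective (or, failing that, the analogous $\psi\in C^*$). For if $\pi$ denotes that composite restricted to $A$, then $\ker\pi=A\cap(\ker\phi\otimes_k C)$, and by \eqref{d.cm_b} every nonzero $b'\in\ker\phi$ still has $b'\otimes c\in A\cap(\ker\phi\otimes_k C)$ for some $c\neq 0$; so the elementary fact gives $\dim_k\ker\pi\geq\dim_k\ker\phi=\dim_k B-1$, and rank--nullity yields $\dim_k A=\dim_k C+\dim_k\ker\pi\geq\dim_k B+\dim_k C-1$. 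The cases $\dim_k B=1$ and $\dim_k C=1$ need separate (but trivial) treatment: by the discussion preceding the Proposition, \eqref{d.cm_c} then forces $A=B\otimes_k C$ in the first case and symmetrically in the second, and \eqref{d.cm_dim} is immediate. So in hunting for $\phi$ we may assume $\dim_k B,\dim_k C\geq 2$.

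The crux, and the step I expect to be the real obstacle, is the existence of such a $\phi$ or $\psi$. Observe that $(\phi\otimes\mathrm{id}_C)(A)$ fails to be all of $C$ precisely when $\phi\otimes\psi\in A^\perp$ for some $\psi\neq 0$; hence no suitable $\phi$ exists exactly when $A^\perp\subseteq B^*\otimes_k C^*$ satisfies the analogue of \eqref{d.cm_b}, and no suitable $\psi$ exactly when $A^\perp$ satisfies the analogue of \eqref{d.cm_c}. Thus the whole argument goes through unless $A^\perp$ satisfies \emph{both} conditions, and one must rule that out. The naive way uses the size of $k$: if $A^\perp$ satisfies the analogue of \eqref{d.cm_b}, then for each nonzero $\phi\in B^*$ one may pick $\psi\neq 0$ with $\phi\otimes\psi\in A^\perp$, after which \eqref{d.cm_b} and \eqref{d.cm_c} for $A$ force, for every $c\notin\ker\psi$, every $b$ with $b\otimes c\in A$ to lie in $\ker\phi$; letting $\phi$ range over a basis of $B^*$ one concludes that $C$ is a union of $\dim_k B$ proper subspaces, which is impossible once $\card(k)\geq\dim_k B$ --- essentially the hypothesis under which the bound was first proved. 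The small-field case, where such a covering of $C$ can genuinely occur, needs a subtler exclusion of the bad configuration; that is the ingredient I would take from \cite{overflow}. Everything else is the bookkeeping above.
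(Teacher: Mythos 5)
Your first two paragraphs are sound, and they reproduce (in rank--nullity language) exactly the paper's Case~1: if some nonzero $\phi\in B^*$ makes $\phi\otimes\mathrm{id}_C$ carry $A$ onto $C,$ then $\ker\phi\otimes C$ meets $A$ in a subspace of dimension $\geq\dim_k B-1$ by your ``elementary fact,'' and \eqref{d.cm_dim} follows. The problem is the third paragraph: the existence of a suitable $\phi$ (or $\psi)$ is not a side issue to be patched but the entire content of the proposition over an arbitrary field, and you do not prove it. Your reduction is correct as far as it goes --- no suitable $\phi$ exists iff $A^\perp\subseteq B^*\otimes_k C^*$ satisfies the analogue of \eqref{d.cm_b}, and dually for $\psi$ --- and your covering argument does exclude the bad configuration when $\card(k)\geq\dim_k B$ (or, running it on the other side, $\geq\dim_k C).$ But that is precisely the cardinality hypothesis the author says his original proof needed and which de~Seguins Pazzis's argument was brought in to remove; a dimension count ($\dim A\geq m+n-1$ and $\dim A^\perp\geq m+n-1$ are compatible once $(m-2)(n-2)\geq 2$) shows the bad configuration is not excluded for free over small fields. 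Deferring ``the subtler exclusion'' to \cite{overflow} leaves the theorem unproved as stated.

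What the paper actually does in that situation is quite different from excluding it: it runs an induction on $(\dim B,\dim C).$ If no functional is surjective on $A,$ pick any nonzero $f$ with $(f\otimes\mathrm{id}_C)(A)$ contained in a hyperplane $C_0,$ set $B_0=\ker f,$ and observe that every pure tensor $b\otimes c\in A$ has $b\in B_0$ or $c\in C_0$ \eqref{d.either_or}. Choosing retractions $p_B\colon B\to B_0$ and $p_C\colon C\to C_0,$ the image $A_0=(p_B\otimes p_C)(A)\subseteq B_0\otimes C_0$ again satisfies \eqref{d.cm_b} and \eqref{d.cm_c}, so induction gives $\dim A_0\geq(m-1)+(n-1)-1;$ and \eqref{d.either_or} supplies two independent elements of $A$ killed by $p_B\otimes p_C,$ recovering the missing $2.$ No hypothesis on $\card(k)$ enters. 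So your Case~1 matches the paper, but you are missing the inductive descent that handles its complement; without it (or some substitute), your argument proves the proposition only under the extra assumption $\card(k)\geq\min(\dim_k B,\dim_k C).$
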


\begin{proof}
(After de Seguins Pazzis \cite{overflow}.)

If $\dim_k(B)$ or $\dim_k(C)$ is $1,$ then~\eqref{d.cm_c},
respectively~\eqref{d.cm_b}, says that $A=B\otimes C,$ and
the desired result is immediate.
So let $m,$ $n>1,$ assume inductively that the result is
known when $B$ has dimension $m-1$ and $C$ has dimension $n-1,$
and suppose we are in a situation with $\dim_k(B)=m$ and $\dim_k(C)=n.$
We consider two cases:

Case~1.
There exists a linear functional $f:B\to k$ such that the induced
map $f\otimes\,\r{id}_C: B\otimes C\to k\otimes C\cong C$
carries $A\subseteq B\otimes C$ surjectively onto $C.$

Then taking any basis $\{c_1,\dots,c_n\}$ of $C,$ we can find
elements $a_1,\dots,a_n\in A$ whose images under
$f\otimes\,\r{id}_C$ are $c_1,\dots,c_n.$
On the other hand, letting $b_1,\dots,b_{m-1}$ be any basis of
$\r{ker}(f)\subseteq B,$ we can find, by~\eqref{d.cm_b},
nonzero $c'_1,\dots,c'_{m-1}\in C$ such
that $b_1\otimes c'_1,\dots,b_{m-1}\otimes c'_{m-1}\in A.$
We claim that
$a_1,\dots,a_n,\,b_1\otimes c'_1,\dots,b_{m-1}\otimes c'_{m-1}\in A$
are linearly independent.
Indeed, given any linear dependence
relation among these elements, if we apply
$f\otimes\,\r{id}_C$ to it, this will annihilate the last $m-1$ terms,
and looking at the remaining terms, we conclude from the linear
independence of $c_1,\dots,c_n$ that the coefficients
of $a_1,\dots,a_n$ in the relation must be zero.
Given this fact, if for any $i\leq m-1$
we let $f_i: B\to k$ be a linear functional which takes $b_i$
to $1$ and all other $b_j$ to $0,$ then application of
$f_i\otimes\,\r{id}_C$ to our relation shows that the coefficient
of $b_i\otimes c'_i$ is also zero.
So the indicated $m+n-1$ elements of $A$
are linearly independent, establishing~\eqref{d.cm_dim} in this case.

The negation of the condition of Case~1 says that for {\em every}
linear functional $f:B\to k,$ a certain conclusion holds.
But to finish the proof, it will be enough to assume this for
{\em some nonzero} $f,$ as we do in

Case~2.
There exists a nonzero linear functional $f:B\to k$ such that the
induced map $f\otimes\,\r{id}_C: B\otimes C\to k\otimes C\cong C$
carries $A\subseteq B\otimes C$ into a proper subspace $C_0\subseteq C.$

Without loss of generality, we can take $C_0$ to have dimension $n-1.$
Let $B_0=\r{ker}(f),$ which has dimension $m-1.$
Then our assumption on $f$ implies
\begin{equation}\begin{minipage}[c]{35pc}\label{d.either_or}
For any element of $A$ of the form $b\otimes c,$
either $b\in B_0$ or $c\in C_0.$
\end{minipage}\end{equation}

Now let $p_B$ be any retraction $B\to B_0,$
let $p_C$ be any retraction $C\to C_0,$ and let $A_0$ be
the image of $A$ under $p_B\otimes p_C: B\otimes C\to B_0\otimes C_0.$
(It is not asserted that $A_0\subseteq A.)$
I claim that the analogs of~\eqref{d.cm_b} and~\eqref{d.cm_c} hold
with $A_0,$ $B_0,$ $C_0$ in the roles of $A,$ $B$ and $C.$
Indeed, given nonzero $b\in B_0,$ let $b'\in B$ be chosen which
projects to $b$ under $p_B,$ but is {\em not} in $B_0;$
and use~\eqref{d.cm_b} to find a nonzero $c\in C$ such
that $b'\otimes c\in A.$
Then by~\eqref{d.either_or}, $c\in C_0,$ so
$c$ is fixed by $p_C;$ so applying
$p_B\otimes p_C$ to $b'\otimes c,$ we get $b\otimes c\in A_0;$
the analog of~\eqref{d.cm_b}.
The symmetric argument gives the analog of~\eqref{d.cm_c}.

Hence by our inductive assumption, $\dim(A_0)\geq (m-1)+(n-1)-1,$
which is short by $2$ of the desired lower bound on
the dimension of $A.$
Since $A_0$ is the image of $A$ under $p_B\otimes p_C,$
it will suffice to find two linearly independent elements of
$A$ which are in the kernel of that map.
To do so, let $b$ span $\r{ker}(p_B)$ and $c$ span $\r{ker}(p_C),$
and use~\eqref{d.cm_b} and~\eqref{d.cm_c} to
find nonzero elements $b\otimes c'$ and $b'\otimes c$ of $A.$
From~\eqref{d.either_or}, one sees that these are linearly independent,
completing the proof.
\end{proof}

Dualizing $B,$ to pass from this result to a statement about
linear maps, as discussed earlier, we get

\begin{corollary}\label{C.cm}
Suppose $A,$ $B$ and $C$ are finite-dimensional vector spaces
over a field $k,$ and $f:A\times B\to C$ a bilinear map, such
that every nonzero element of $A$ induces a nonzero map $B\to C,$
but such that this property is lost on restriction to any
proper subspace of $B,$ and likewise on composition with the map to
any proper homomorphic image of $C.$
Then $\dim(A)\geq\dim(B)+\dim(C)-1.$\qed
\end{corollary}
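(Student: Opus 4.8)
The plan is to deduce the corollary from Proposition~\ref{P.cm_bilin} via the dualization sketched in the two paragraphs preceding that proposition. Since every nonzero $a\in A$ induces a nonzero linear map $B\to C$, the bilinear map $f$ gives an injection $A\hookrightarrow\mathrm{Hom}_k(B,C)$, and composing with the canonical isomorphism $\mathrm{Hom}_k(B,C)\cong B^*\otimes_k C$ lets me regard $A$ as a subspace of $B^*\otimes_k C$. Writing $V=B^*$, so that $A\subseteq V\otimes_k C$, it then suffices to check that conditions~\eqref{d.cm_b} and~\eqref{d.cm_c} of the proposition hold with $V$ and $C$ in the roles of its $B$ and $C$; the proposition will then give $\dim_k A\geq\dim_k V+\dim_k C-1=\dim_k B+\dim_k C-1$, which is the assertion.

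The one thing to do is translate the two ``loss of property'' hypotheses into~\eqref{d.cm_b} and~\eqref{d.cm_c}. For~\eqref{d.cm_b}: a nonzero $v\in V=B^*$ has as its kernel a hyperplane $H\subseteq B$, and a linear map $B\to C$ vanishes on $H$ exactly when it has the form $b\mapsto v(b)\,c$ for some $c\in C$, i.e.\ when, as an element of $V\otimes_k C$, it is a pure tensor $v\otimes c$. The corollary's hypothesis, applied to the proper subspace $H$, says that the ``nonzero action'' property fails when $A$ is restricted to $H$, i.e.\ that some nonzero element of $A$ vanishes on $H$; that element is then a nonzero pure tensor $v\otimes c$, necessarily with $c\neq 0$, which is exactly~\eqref{d.cm_b}. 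Symmetrically, for~\eqref{d.cm_c}: given nonzero $c\in C$, the projection $C\to C/kc$ onto a proper homomorphic image kills composition with a map $B\to C$ exactly when that map has image in $kc$, i.e.\ when it is a pure tensor $v\otimes c$ with $v\in V$; the corollary's hypothesis applied to this homomorphic image supplies a nonzero such element of $A$, which is~\eqref{d.cm_c}. Note that although the hypotheses quantify over all proper subspaces of $B$ and all proper homomorphic images of $C$, the argument uses only the maximal proper subspaces (the hyperplanes $H$) and the homomorphic images of the form $C/kc$, which are precisely the cases admitting a pure-tensor description.

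I do not expect any real obstacle here. Once $A$ has been identified with a subspace of $B^*\otimes_k C$, the hypotheses of the corollary match~\eqref{d.cm_b} and~\eqref{d.cm_c} on the nose, and the conclusion is immediate from Proposition~\ref{P.cm_bilin} together with $\dim_k B^*=\dim_k B$. The only point calling for a moment's attention is the bookkeeping of the duality: a hyperplane condition on $B$ must become a pure-tensor condition in the $B^*$-factor, and a one-dimensional-quotient condition on $C$ a pure-tensor condition in the $C$-factor, so that the two hypotheses of the proposition emerge with $V=B^*$ and $C$ in the correct roles.
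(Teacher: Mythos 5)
Your argument is correct and is exactly the paper's intended derivation: the corollary is obtained from Proposition~\ref{P.cm_bilin} by identifying $A$ with a subspace of $B^*\otimes_k C$ and observing that failure of faithfulness on hyperplanes of $B$ and on quotients $C/kc$ translates precisely into conditions~\eqref{d.cm_b} and~\eqref{d.cm_c}, as sketched in the paragraphs preceding the proposition. You have simply written out in full the dualization the paper leaves implicit.
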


We shall now deduce the asserted generalization of Gulliksen's result.
(Incidentally, Gulliksen does not explicitly say in \cite{TG} that
his rings are commutative; but this is apparent
from the techniques he uses, e.g., the duality called on
at the top of \cite[p.\,79]{TG}; and is also evidenced
by the fact that commutativity
is one of the properties he verifies for the matrix
example of \cite[Theorem~2]{TG}.
In the present note,
rings are not assumed commutative unless this is stated.)

Recall that the {\em socle} of a left or right module $M$
is the sum of its simple submodules.
By the left and right socles of a ring $R,$ we
understand $\soc({_R}R)$ and $\soc(R_R),$ the socles of $R$
as a left and as a right module, each of which is a $\!2\!$-sided
ideal of $R.$
On the other hand, by {\em the socle} of $R,$ $\soc(R),$ we shall mean
the sum of all its minimal nonzero $\!2\!$-sided ideals,
which, for $R$ left or right Artinian, is the intersection of its right
and left socles.
(So, for example, in the algebra of $n\times n$ upper
triangular matrices over a field,
the left socle is the ideal of matrices with support in
the top row, the right socle is the ideal of matrices with
support in the last column, and $\soc(R)$ is the ideal
of matrices with support in the upper right-hand corner.)

We denote the Jacobson radical of a ring $R$ by $J(R).$

\begin{theorem}[cf.\ Gulliksen {\cite[Lemma~2]{TG}}]\label{T.TG}
Suppose $R$ is a left Artinian local ring such that
$\soc(R)$ is central in $R,$ and let $M$ be any
faithful left $\!R\!$-module such that no proper submodule or
homomorphic image of $M$ is faithful.
Then
\begin{equation}\begin{minipage}[c]{35pc}\label{d.cm_local_dim}
$\dim_{R/J(R)}(M/J(R)M)\ +\ \dim_{R/J(R)}(\soc(M))\ \leq
\ \dim_{R/J(R)}(\soc(R))\ +\ 1.$
\end{minipage}\end{equation}
\end{theorem}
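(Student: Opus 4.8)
The plan is to apply Corollary~\ref{C.cm} to the multiplication map
\[
\mu:\ \soc(R)\times\bigl(M/J(R)M\bigr)\ \longrightarrow\ \soc(M),\qquad (z,\,\overline{m})\ \longmapsto\ zm ,
\]
so the first thing to check is that $\mu$ is well-defined. It lands in $\soc(M)$ because $\soc(R)\subseteq\soc({}_RR)$ and, for each $m\in M$, right translation $r\mapsto rm$ is an $R$-module map ${}_RR\to M$, hence carries simple submodules of ${}_RR$ to simple or zero submodules of $M$; thus $\soc(R)M\subseteq\soc({}_RR)M\subseteq\soc(M)$. And it factors through $M/J(R)M$ in the second variable because $\soc(R)\subseteq\soc(R_R)$ gives $\soc(R)\,J(R)=0$, hence $\soc(R)\,J(R)M=0$.

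Next I would record some reductions. First, the hypothesis that $\soc(R)$ is central forces $R/J(R)$ to be a \emph{commutative} field $k$: if $\soc(R)\ne R$, pick a nonzero minimal two-sided ideal $I\subseteq\soc(R)$; since $J(R)I\subsetneq I$, Nakayama (applied to the finitely generated left module $I$) gives $J(R)I=0$, and likewise $IJ(R)=0$, so $I$ is a nonzero $R/J(R)$-bimodule, free of rank one as a left $R/J(R)$-module, say $I=(R/J(R))x$; centrality of $x$ then yields $\bar d\,\bar d'\,x=\bar d'\,\bar d\,x$ for all $\bar d,\bar d'\in R/J(R)$, hence $R/J(R)$ is commutative. (If $\soc(R)=R$ then $R=k$ is a field, $M\cong R$, and~\eqref{d.cm_local_dim} reads $1+1\le 1+1$.) With $k$ commutative, centrality of $\soc(R)$ also makes $\mu$ genuinely $k$-bilinear. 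Second, $M$ is finitely generated: by the descending chain condition on left ideals of $R$, the annihilator of some finite subset of $M$ is zero, so that subset generates a faithful submodule, which by submodule-minimality must be all of $M$. Hence $M$ has finite length over the left Artinian (so left Noetherian) ring $R$, so $M/J(R)M$, $\soc(M)$, and the $J(R)$-annihilated ideal $\soc(R)$ are all finite-dimensional over $k$; and $M/J(R)M$ (Nakayama) and $\soc(M)$ are nonzero.

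Now I would verify the hypotheses of Corollary~\ref{C.cm} with $A=\soc(R)$, $B=M/J(R)M$, $C=\soc(M)$. (i) Each nonzero $z\in\soc(R)$ induces a nonzero map $B\to C$, because its image is $zM$, which is nonzero by faithfulness of $M$. (ii) This property is lost on restriction to any proper subspace $V\subsetneq M/J(R)M$: since $M/J(R)M$ is semisimple, $V$ is a submodule, and its preimage $M'$ is a proper submodule of $M$, hence (submodule-minimality) not faithful, so $\r{ann}_R(M')$ is a nonzero two-sided ideal; any nonzero two-sided ideal contains a minimal one and hence meets $\soc(R)$, and a nonzero $z$ in the intersection kills $V$. (iii) This property is also lost on composition with the map to any proper homomorphic image of $\soc(M)$: it suffices to treat the quotient by a simple submodule $S\subseteq\soc(M)$ (every nonzero submodule of $\soc(M)$ contains such an $S$), and then $M/S$ is a proper homomorphic image of $M$, hence (image-minimality) not faithful, so $\r{ann}_R(M/S)=\{r\in R:rM\subseteq S\}$ is a nonzero two-sided ideal meeting $\soc(R)$, and a nonzero $z$ in the intersection has $zM\subseteq S$. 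Granting (i)--(iii), Corollary~\ref{C.cm} yields $\dim_k\soc(R)\ge\dim_k(M/J(R)M)+\dim_k\soc(M)-1$, which is exactly~\eqref{d.cm_local_dim}.

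I expect the real content to lie in the passage from the two minimality hypotheses on $M$ to conditions (ii) and (iii): in each case the annihilator produced by non-faithfulness is a \emph{two-sided} ideal, and therefore automatically meets $\soc(R)$, so the required vanishing can be realized inside $\soc(R)$ rather than merely inside $R$; in (iii) one also has to remember to reduce to quotients by simple submodules. The remaining ingredients — that $\mu$ is well-defined and lands in $\soc(M)$, and that the centrality hypothesis lets one work over a commutative base field — are routine once one keeps in mind that $\soc(R)$ is squeezed between $\soc({}_RR)$ and $\soc(R_R)$ and is central.
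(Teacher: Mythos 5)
Your proposal is correct and follows essentially the same route as the paper: induce the bilinear map $\soc(R)\times M/J(R)M\to\soc(M)$ over the field $k=R/J(R)$ (using centrality of the socle to see $R/J(R)$ is commutative), translate faithfulness and the two minimality hypotheses into the hypotheses of Corollary~\ref{C.cm} via the fact that a nonzero two-sided ideal of a left Artinian ring meets $\soc(R)$, and apply that corollary. Your write-up is in fact more detailed than the paper's on the points it treats tersely (the commutativity of $R/J(R)$ and the finite generation of $M$, which the paper defers to \S\ref{S.sub_or_hom}).
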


\begin{proof}
$J(R)$ annihilates $M/J(R)M$ and $\soc(M)$
as left $\!R\!$-modules, and annihilates the $\!(R,R)\!$-bimodule
$\soc(R)$ on both sides; so the first two
become left vector spaces, and the latter a bimodule,
over the division ring $R/J(R).$
The statement that $\soc(R)$ is central in $R$ says that
$R/J(R)$ has the same action on the two sides of $\soc(R),$ from
which it immediately follows that the division
ring $R/J(R)$ must be a field $k.$
Writing $A=\soc(R),$ $B=M/J(R)M,$ $C=\soc(M),$
the left module operation of $R$ on $M$
induces an action by which $A$ carries $B$ into $C,$ giving
a $\!k\!$-bilinear map $A\times B\to C.$
Because $M$ is faithful, every nonzero element of $A$ gives
a nonzero map $B\to C,$ while
the minimality assumptions on $M$ imply that this property of our
bilinear map is lost when one passes to any proper subspace of $B$
or proper homomorphic image of $C.$
(Indeed, if $B_0$ is a proper $\!k\!$-subspace of $B,$
equivalently, a proper $\!R\!$-submodule, then its
inverse image in $M$ is a proper submodule, hence non-faithful.
The kernel of the action of $R$ on this submodule must meet $\soc(R)=A,$
and any $a\in A$ in that kernel acts trivially on $B_0.$
The dual argument applies to proper homomorphic images $C_0$ of $C.)$
That $B$ and $C$ are finite-dimensional is also not hard to see
from these minimality conditions.
(Some general results of which this finite dimensionality is a special
case are developed in \S\ref{S.sub_or_hom}.)

Hence Corollary~\ref{C.cm} applies, and gives the desired inequality.
\end{proof}

{\em Remarks}:
If $R$ is a left Artinian local ring, then
a {\em sufficient} condition for $\soc(R)$ to be central is
that $R$ be an algebra over a field $k,$ and that the residue field
of $R$ be $k$ itself.
Such an $R$ can be far from commutative;
for instance, over any field we can take for $R$ the ring of upper
triangular $n\times n$ matrices over $k$ with scalar main diagonal.
On the other hand, if $R$ contains a field $k$ which
is not central in $R,$ but which again
maps isomorphically onto $R/J(R),$
then $\soc(R)$ may or may not be central.
For example, if we take a twisted polynomial ring $k[x;\theta]$
where $\theta$ is an automorphism of $k$ of finite order $d>1,$
and look at its local factor-ring $k[x;\theta]/(x^{n+1})$ for some
$n>0,$ then $k$ is not central in $R,$ but the socle, $k\,x^n,$
is central if and only $n$ is a multiple of $d.$
Likewise, given a field $k$ and automorphisms $\theta_2,\dots,\theta_n,$
we can generalize the triangular-matrices example
by letting $R$ be the ring of upper-triangular $n\times n$ matrices
$((a_{ij}))$ over $k$
satisfying $a_{ii}=\theta_i(a_{11})$ for $2\leq i\leq n.$
This will be Artinian and local with residue field isomorphic
to $k,$ and its socle, $k\,e_{1n},$ will be
central if and only if $\theta_n=\r{id}.$

Turning back to Proposition~\ref{P.cm_bilin}, one may ask what a
minimal subspace $A\subseteq B\otimes C$ satisfying~\eqref{d.cm_b}
and~\eqref{d.cm_c} can look like.
Easy examples are the spaces of the form $B\otimes c + b\otimes C$
for arbitrary nonzero elements $b\in B,$ $c\in C.$
These spaces have dimension exactly $\dim(B)+\dim(C)-1,$
since $B\otimes c$ and $b\otimes C$
intersect in the one-dimensional subspace spanned by $b\otimes c.$
In matrix notation, this example can be pictured as the vector space
of $m\times n$ matrices with support in the union of the first
row and the first column.

Using such
matrix notation, one can describe further minimal families.
For every positive integer $q\leq\min(m,n),$ I claim that the space
$A_q$ of $m\times n$ matrices with support in the union of the first
$q$ rows and the first $q$ columns, such
that the first $q$ entries along the main diagonal are all equal,
satisfies~\eqref{d.cm_b} and~\eqref{d.cm_c}.
To see ~\eqref{d.cm_b}, note that given any nonzero column vector
$b=(\beta_1,\dots,\beta_m)^T,$ we can find a nonzero
row vector $c=(\gamma_1,\dots,\gamma_q,\,0,\dots,0)$ such that
$\beta_1\gamma_1=\dots=\beta_q\gamma_q.$
Indeed, if at least one of $\beta_1,\dots,\beta_q$ is zero, we can
choose the $\!\gamma\!$'s not all zero so that all
products $\beta_i\gamma_i$ are
zero, while if all of $\beta_1,\dots,\beta_q$ are nonzero, we can
choose the $\!\gamma\!$'s so that all $\beta_i\gamma_i$ equal~$1.$
In either case, the $m\times n$ matrix $b\otimes c$ will be
nonzero and lie in $A_q.$
Similarly, given a nonzero row vector $c,$ we can find a nonzero
column vector $b$ such that $b\otimes c\in A_q,$
proving~\eqref{d.cm_c}.

It is not hard to see that for every column
vector $b\in k^m$ at most one of whose first $q$ entries is zero,
and which has at least one nonzero entry after the first $q$
entries, the above construction gives (up to scalars) the
only $c$ such that $b\otimes c\in A_q.$
(The condition that at least one entry of $b$ after the
first $q$ be nonzero guarantees that every $c$ with
$b\otimes c\in A_q$ must live in its first $q$ entries, which
we need to get uniqueness.)
Likewise, if we are given $c$ at most one of whose
first $q$ entries is zero, and having at least
one later nonzero entry, there is up to scalars
only one $b$ such that $b\otimes c\in A_q.$
Hence, any subspace of $A_q$ that satisfies~\eqref{d.cm_b}
and~\eqref{d.cm_c} must contain all the elements $b\otimes c$
of the two sorts just described.
Combining these observations, one can deduce that if at least one
of $m,$ $n$ is $>q,$ and $k$ has $>2$ elements, then $A_q$ indeed
has no proper subspaces satisfying~\eqref{d.cm_b} and~\eqref{d.cm_c}.
(The condition that $k$ has $>2$ elements is used to get every
row or column as a sum of rows or columns satisfying appropriate
conditions on which entries are nonzero.
For $k$ the $\!2\!$-element field and $q>2,$ there do
in fact exist proper subspaces of $A_q$
satisfying~\eqref{d.cm_b} and~\eqref{d.cm_c}.)
In the remaining case, $m=n=q,$ one finds that the same argument works
without the proviso in the first sentence of this paragraph
about ``at least one nonzero entry after the first $q$ entries''.

So if $k$ has more than $2$ elements,
these constructions do give minimal examples.
I don't know whether, conversely, every minimal subspace of a
tensor product $B\otimes_k C$
satisfying~\eqref{d.cm_b} and~\eqref{d.cm_c} is, with respect
to some bases of $B$ and $C,$ of one of these forms.

It is curious that the bound $\dim(A)\geq\dim(B)+\dim(C)-1$
of Proposition~\ref{P.cm_bilin}
also appears in \cite[Proposition~1.3, case $k=1$]{DE},
for subspaces $A\subseteq B\otimes C$
subject to a different condition involving rank-$\!1\!$ entities;
namely, that $A$ not be contained in the kernel of any tensor product
$f\otimes g$ of linear functionals $f\in A^*,$ $g\in B^*$
\cite[Proposition-Definition~1.1(2), case $k=1$]{DE}.
But I cannot see a relation between the two results.
Indeed, the result in \cite{DE} is proved only over an algebraically
closed field, and it is noted that it fails without
that condition; while the last clause of \cite[Proposition~1.3]{DE}
implies that every minimal submodule $A$ of the sort considered there
has dimension exactly $\dim(B)+\dim(C)-1,$ though for
our condition, we have seen the opposite.
Finally, most of the minimal examples noted above for our condition
do not satisfy the condition of \cite{DE}, precisely because
their matrix representations involve $\!0\!$'s in fixed locations.
So if there is a relation between the two
results, it must be a subtle one.
\vspace{.3em}

Turning back to
Theorem~\ref{T.TG}, and restricting attention to commutative $R,$
observe that if we wish to extend that result to a
commutative not-necessarily-local Artinian ring $R,$ then
this will be a direct product of commutative local
rings $e_1 R\times\dots\times e_d R,$ where the $e_i$ are the
minimal idempotents of $R.$
By summing the inequalities~\eqref{d.cm_local_dim} for these $d$ rings,
we get a similar inequality, with the final $+1$ replaced by $+d.$
The dimensions in the inequalities we sum are with respect to different
base fields $e_i R/J(e_i R);$ the most natural way to refer
to these dimensions is as the {\em lengths} of those modules.
So in this situation, the analog of~\eqref{d.cm_local_dim} is
\begin{equation}\begin{minipage}[c]{35pc}\label{d.cm_artin_dim}
$\lt(M/J(R)M)\ +\ \lt(\soc(M))\ \leq\ \lt(\soc(R))\ +\ d.$
\end{minipage}\end{equation}
This points toward the form of the inequalities we will obtain in
the next two sections, for not-necessarily-commutative
Artinian $R.$

\section{The general Artinian case -- preliminary steps}\label{S.lt_art,pre}

The result we are now aiming for
will again be an application of a statement about
a bilinear map, but with semisimple left modules in the roles
of $B$ and $C,$ and a bimodule in the role of $A.$
In this case, I don't see how to turn statements
about bilinear maps into statements about subobjects of
tensor products, so we shall develop directly the
``$A\times B\to C$'' result analogous to Corollary~\ref{C.cm}.
(However, there will be an obvious symmetry between
what we do with $B$ and with $C;$ which
suggests that I am missing some way that these can be unified.)

Throughout this and the next section, we will therefore assume that
\begin{equation}\begin{minipage}[c]{35pc}\label{d.setup}
$S$ and $T$ are semisimple Artinian rings, ${_S}B$ and ${_T}C$
are left modules of finite lengths, $_T A_S$ is a
bimodule,
and $h:{_T}A_S \times {_S}B\to {_T}C$ a balanced bilinear map, such that
\end{minipage}\end{equation}
\begin{equation}\begin{minipage}[c]{35pc}\label{d.nondeg}
Every nonzero $a\in A$ induces a nonzero map of
abelian groups $h(a,-):B\to C.$
\end{minipage}\end{equation}

In our final application, $S$ and $T$ will be the same, namely
$R/J(R);$ but keeping them distinct until then
will make our manipulations clearer.

For $a\in A,$ we shall think of $h(a,-): B\to C$ as
``the action of $a$'', and thus
speak of elements of $A$ as annihilating certain elements of $B,$
having certain elements of $C$ in their images, etc.;
thus, we will seldom mention $h$ explicitly.

When we speak of the kernel or image of a
{\em family} of maps, we shall mean
the intersection of the kernels of those maps, respectively, the
sum of their images.

We now state the conditions corresponding
to~\eqref{d.cm_b} and~\eqref{d.cm_c}.
\begin{equation}\begin{minipage}[c]{35pc}\label{d.b}
For every maximal submodule $B_0\subseteq B,$ there exists a nonzero
$a\in A$ which annihilates $B_0;$ equivalently, such that the
kernel of $a\,S$ is precisely $B_0.$
\end{minipage}\end{equation}
\begin{equation}\begin{minipage}[c]{35pc}\label{d.c}
For every simple submodule $C_0\subseteq C,$ there exists a nonzero
$a\in A$ which carries $B$ into $C_0;$ equivalently, such that
the image of $Ta$ is precisely $C_0.$
\end{minipage}\end{equation}

The statements of equivalence follow by combining the fact that
$\r{ker}(a\,S)$ and $\r{im}(T\,a)$ are submodules
of ${_S}B$ and ${_T}C$ with the assumptions that $B_0$ is
maximal and $C_0$ simple.

Let us now see what happens to conditions~\eqref{d.b} and~\eqref{d.c}
when we write $S$ and $T$ as direct products of simple Artin rings,
and decompose $A,$ $B$ and $C$ accordingly.
Say the decompositions of the identity elements
of $S$ and $T$ into minimal central idempotents
are $1_S=e_1+\dots+e_m$ and $1_T=f_1+\dots+f_n,$
so that $S\cong\prod_i e_i S$ and $T\cong\prod_j f_j T$ as rings.
Then in the situation of~\eqref{d.c}, the simple submodule
$C_0$ is necessarily contained in some summand $f_j\,C,$
so the $a$ that we get must satisfy $a=f_j\,a.$
Moreover, if we take some $e_i$ such that $a\,e_i\neq 0,$
then the image of $a\,e_i$ must also generate the simple
submodule $C_0;$ so replacing $a$ by $a\,e_i$
if necessary, we can assume $a\in f_j\,A\,e_i.$

Let us now fix some $f_j.$
Then we can deduce from~\eqref{d.c} and the above observations that
every simple submodule of $f_j\,C$ is generated by the image of an
element of $f_j\,A\,e_i$ for some $i\leq m.$
It would be nice if we could reverse the order of
quantifications, and say that there exists some
$i\leq m,$ such that every simple submodule of $f_j\,C$ is generated by
the image of an element of $f_j\,A\,e_i;$
but that is too much to hope for.
However, it turns out that, under some weak assumptions,
we can show that there exists an $i$ such that {\em enough}
of the simple submodules of $f_j\,C$ are images of
elements of $f_j\,A\,e_i$ to suit our purposes.
The key concept is introduced in the definition below.
In that definition, think of $X$ as either $B$ or $e_i\,B,$
of $Y$ as $f_j\,C,$ of $W$ as $f_j\,A,$ or $f_j\,A\,e_i,$
and of $V$ as $f_j\,T\,f_j.$
The adjective ``left'' in that definition, and ``right'' in the one
that follows it, refer to whether we are thinking of fixing
the left factor $f_j$ in the product-symbol $f_j\,A\,e_i$
(as in the above discussion), or the right factor $e_i.$

\begin{definition}\label{D.lt_strong}
If $W$ is a set of homomorphisms from an abelian
group $X$ to a left module $Y$ over a simple Artin ring $V,$
and $N$ is a positive real number, then we shall call $W$
{\em left $\!N\!$-strong} if for every family of $\leq N$
proper submodules of $Y,$ there exists $w\in W$ such that
$V w(X)$ is a simple submodule of $Y$ not
contained in any member of that family.
We shall call $W$ {\em left strong} if it is left $\!N\!$-strong
for all positive real numbers $N.$
We shall at times use ``left $\!\infty\!$-strong'' as a
synonym for ``left strong''.
\end{definition}

(We have taken $N,$ when finite, to be a real number rather than an
integer so that some later statements can be formulated more
conveniently; e.g., so that we can say ``$\!N/d\!$-strong''
rather than ``$\!\lfloor N/d \rfloor\!$-strong''.)

We similarly define a variant of condition~\eqref{d.b}, namely

\begin{definition}\label{D.rt_strong}
If $W$ is a set of homomorphisms from a left module
$X$ over a simple Artin ring $U$ to an abelian group $Y,$
and $N$ a positive real number, then we shall call $W$
{\em right $\!N\!$-strong} if for every family of $\leq N$
nonzero submodules of $X,$ there exists $w\in W$ such that
$\r{ker}(w\,U)$ is a maximal submodule of $X,$
and does not contain any member of that family.
We shall call $W$ {\em right strong} if it is right $\!N\!$-strong
for all positive real numbers $N.$
We shall at times use ``right $\!\infty\!$-strong'' as a
synonym for ``right strong''.
\end{definition}

The observations following~\eqref{d.b} and~\eqref{d.c} will
yield statements to the effect that for appropriate $N,$
the sets $\bigcup_i\,f_j\,A\,e_i$
of maps $B\to f_j\,C$ are left $\!N\!$-strong,
and that the sets $\bigcup_j\,f_j\,A\,e_i$ of maps $e_i\,B\to C$
are right $\!N\!$-strong.
(We postpone the details for the moment.)
The next lemma shows the virtue of the $\!N\!$-strong condition:
it is inherited, in slightly weakened form, by at least one
term of any such union.

\begin{lemma}\label{L.strong_union}
Suppose, in the context of Definition~\ref{D.lt_strong},
that $W$ is the union $W_1\cup\dots\cup W_d$ of $d$ subsets.
Then if $W$ is left $\!N\!$-strong, for $N$ a positive
real number or $\infty,$ then
at least one of the $W_i$ is left $\!N/d\!$-strong.
\textup{(}So for $N=\infty,$ this says that
if $W$ is left strong, so is one of the $W_i.)$

Likewise, if in the context of Definition~\ref{D.rt_strong},
$W=W_1\cup\dots\cup W_d$ is right $\!N\!$-strong, then
at least one of the $W_i$ is right $\!N/d\!$-strong.
\end{lemma}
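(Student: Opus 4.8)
The plan is to argue by contradiction via a union bound on the ``bad families'' witnessing failure of the strong condition. Suppose $W=W_1\cup\dots\cup W_d$ is left $\!N\!$-strong but no $W_i$ is left $\!N/d\!$-strong, where $\infty/d$ is read as $\infty.$ First I would unwind the failure of each $W_i$: since $W_i$ is not left $\!N/d\!$-strong, there is a family $\mathcal{F}_i$ of proper submodules of $Y$ — with $|\mathcal{F}_i|\leq N/d$ when $N$ is finite, and $|\mathcal{F}_i|$ merely finite when $N=\infty$ — such that for no $w\in W_i$ is $Vw(X)$ a simple submodule of $Y$ lying outside every member of $\mathcal{F}_i.$

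Next I would assemble $\mathcal{F}=\mathcal{F}_1\cup\dots\cup\mathcal{F}_d$ (as a family, concatenated with multiplicity). Its cardinality is at most $\sum_i|\mathcal{F}_i|\leq d\cdot(N/d)=N$ when $N$ is finite, and at all events finite when $N=\infty$; in the latter case, left strongness of $W$ in particular makes $W$ left $\!|\mathcal{F}|\!$-strong. Either way, applying the left $\!N\!$-strong (resp.\ left $\!|\mathcal{F}|\!$-strong) hypothesis to the single family $\mathcal{F}$ produces some $w\in W$ with $Vw(X)$ a simple submodule of $Y$ contained in no member of $\mathcal{F}.$ But $w$ lies in some $W_i,$ and $\mathcal{F}_i\subseteq\mathcal{F},$ so this $w$ witnesses precisely what was assumed impossible for $W_i$ — a contradiction. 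Hence some $W_i$ is left $\!N/d\!$-strong.

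The right-strong statement follows by the same argument with the roles of kernels and images interchanged: replace ``proper submodule of $Y$'' by ``nonzero submodule of $X$,'' ``$Vw(X)$ a simple submodule of $Y$'' by ``$\r{ker}(w\,U)$ a maximal submodule of $X$,'' and ``contained in no member'' by ``containing no member.'' I do not anticipate a genuine obstacle here — it is a pigeonhole argument — the only points needing a little care are the bookkeeping in the $N=\infty$ case (reducing it to a large enough finite $N$ via the finite families just chosen) and the convention that a ``family of $\leq N$ submodules,'' for $N$ a positive real, simply means one whose cardinality, an integer, is $\leq N,$ so that $d$ families each of size $\leq N/d$ do union to one of size $\leq N.$
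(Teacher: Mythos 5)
Your argument is correct and is essentially the paper's own proof: the paper proves the finite-$N$ case in contrapositive form by taking the union of the $d$ witnessing families of $\leq N/d$ proper submodules to get a single family of $\leq N$ submodules defeating $W$, which is exactly your union bound phrased by contradiction. The only cosmetic difference is in the $N=\infty$ case, where the paper reduces to the finite case while you handle it directly via the finiteness of the assembled family; both are fine.
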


\begin{proof}
Let us prove the statements for finite $N$ in contrapositive form.
In the case of the first assertion, if left $\!N/d\!$-strength
fails for each of the $W_i,$ then for each $i$ we can find
a family of $\leq N/d$ proper submodules of $Y$ whose union
contains all simple submodules generated by images of members of $W_i.$
Taking the union over $i$ of these sets of submodules, we get
a set of $\leq N$ proper submodules of $Y$ whose union
contains all such simple submodules, showing that
$W$ is not left $\!N\!$-strong.
The dual argument works for the second statement.
The cases with $N=\infty$ follow from the cases for finite $N.$
\end{proof}

The next lemma gives the postponed argument which will allow us
to obtain from conditions~\eqref{d.b} and~\eqref{d.c} statements
about $\!N\!$-strength; i.e., which will show that ``{\em all} simple
submodules'' as in~\eqref{d.c} entails ``a family of submodules not
contained in the union of $N$ proper submodules'', for appropriate $N.$

We have spoken above of the {\em minimal central} idempotents
of a semisimple ring $T;$ we shall now deal with
{\em minimal} idempotents (where an idempotent $e$ of a
ring $R$ is considered
less than or equal to an idempotent $f$ if $eRe\subseteq fRf;$
equivalently, if $e = ef = fe.$
By ``minimal'' we of course mean ``minimal nonzero''.)
Note that if $f$ is a minimal central idempotent of a semisimple
artinian ring $T,$ then
$V=f\,Tf$ is a simple Artinian ring, that is,
a matrix ring $\r{Matr}_{n,n}(D)$ over some division ring $D$
\cite[Theorem~3.5]{TYL1};
and that by taking any minimal idempotent $f'\in \r{Matr}_{n,n}(D),$
one can recover $D$ up to isomorphism as
$f'\,\r{Matr}_{n,n}(D)\,f'= f'(f\,Tf)f'=f'Tf'.$

\begin{lemma}\label{L.not_union}
Let $V$ be a simple Artin ring, $Y$ a left $\!V\!$-module,
$f'$ a minimal idempotent of $V,$ and $N=\card(f'\,Vf').$

Then $Y$ cannot be written as a finite union
$Y_1\cup\dots\cup Y_n$ of proper submodules with $n\leq N.$

Likewise, there is no finite family of nonzero submodules
$Y_1,\dots,Y_n$ of $Y$ with $n\leq N$
such that every maximal submodule contains some $Y_i.$
\end{lemma}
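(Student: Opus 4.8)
The plan is to reduce both assertions, via the minimal idempotent $f',$ to a single classical fact about vector spaces over a division ring.

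As recalled just before the lemma, $\Delta:=f'Vf'$ is a division ring and $N=\card(\Delta).$ Since $V$ is simple, the two-sided ideal $Vf'V,$ which contains the nonzero element $f',$ is all of $V;$ from this one checks routinely that $Y_0\mapsto f'Y_0$ and $E_0\mapsto VE_0$ are mutually inverse, inclusion-preserving bijections between the submodules of the left $\!V\!$-module $Y$ and the submodules of the left $\!\Delta\!$-module $E:=f'Y$ (the key identities being $Vf'Y_0=Y_0,$ which follows from $Vf'V=V,$ and $f'VE_0=\Delta E_0=E_0,$ valid since $E_0$ is $f'$-fixed). Assuming as we may that $Y\neq 0,$ so that $E\neq 0,$ this lattice isomorphism carries proper submodules to proper submodules and maximal submodules to maximal submodules. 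Hence it suffices to prove both assertions with $V$ replaced by $\Delta$ and $Y$ replaced by the nonzero left $\!\Delta\!$-vector space $E.$

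In this reduced form, the first assertion is the well-known statement that a nonzero left vector space $E$ over a division ring $\Delta$ is not a union of finitely many proper subspaces $E_1,\dots,E_n$ with $n\leq\card(\Delta).$ I would argue by contradiction: take such a covering with $n$ minimal, note $n\geq 2,$ and observe that by minimality no $E_i$ is contained in the union of the others. Picking $x\in E_1\setminus\bigcup_{j\geq 2}E_j$ (so $x\neq 0$) and $y\in E\setminus E_1,$ the $\card(\Delta)$ elements $y+\lambda x$ (for $\lambda\in\Delta$) are pairwise distinct, and none lies in $E_1$ (else $y=(y+\lambda x)-\lambda x\in E_1$); since they are covered by the $n-1<\card(\Delta)$ subspaces $E_2,\dots,E_n,$ two of them, $y+\lambda x$ and $y+\mu x$ with $\lambda\neq\mu,$ lie in a common $E_j$ with $j\geq 2,$ whence $x=(\lambda-\mu)^{-1}(\lambda-\mu)x\in E_j,$ a contradiction.

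For the second assertion in reduced form -- that no family of $n\leq\card(\Delta)$ nonzero subspaces $E_1,\dots,E_n$ of $E$ can have every maximal subspace of $E$ containing some $E_i$ -- I would dualize and invoke the first assertion. Choose $v_i\in E_i\setminus\{0\},$ and work in the right $\!\Delta\!$-module $E^{*}=\r{Hom}_\Delta(E,{}_\Delta\Delta),$ which is nonzero because $E$ is. For each $i,$ evaluation at $v_i$ is a homomorphism of right $\!\Delta\!$-modules $E^{*}\to\Delta$ that is surjective since $v_i\neq 0,$ so its kernel $K_i$ is a proper submodule of $E^{*}.$ Every maximal subspace $M\subseteq E$ is the kernel of some nonzero $\phi\in E^{*},$ because $E/M$ is a simple, hence one-dimensional, $\!\Delta\!$-module; and since $M\supseteq E_i\ni v_i$ for some $i,$ we get $\phi\in K_i.$ Thus $E^{*}=K_1\cup\dots\cup K_n$ exhibits the nonzero left $\!\Delta^{\r{op}}\!$-vector space $E^{*}$ as a union of $n\leq N$ proper subspaces, contradicting the first assertion applied over $\Delta^{\r{op}}$ (whose cardinality is again $N$). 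The only genuinely non-routine points are the pigeonhole covering argument and the observation that, after passing to the dual space, the second assertion is a special case of the first; the reduction from $V$ to $\Delta$ through $f',$ though it must be done carefully, is standard.
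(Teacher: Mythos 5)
Your proof is correct and follows essentially the same route as the paper's: reduce to a division ring via the lattice isomorphism $Y_0\mapsto f'Y_0$ (the paper cites Morita equivalence, you verify it by hand), prove the covering statement with the same ``line $y+\lambda x$ plus pigeonhole'' trick (you phrase it as a minimal counterexample, the paper as an induction on the number of subspaces avoided), and obtain the second assertion by passing to the dual space $\r{Hom}(E,{}_\Delta\Delta)$ and applying the left--right dual of the first.
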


\begin{proof}
The first of the above assertions is equivalent to saying that
there does not exist a finite
family of $\leq N$ proper submodules of $Y$ such
that every simple submodule of $Y$ is contained in a member of
that family;
which is a statement about the lattice of submodules of $Y.$
Now letting $D=f'\,Vf',$ a division ring, we know that $V$ is
Morita equivalent to $D,$ so the lattice of submodules
of $Y$ is isomorphic to that of the $\!D\!$-vector-space $f'Y.$
Hence in our proof, we may assume without loss of generality
that $V$ is a division ring $D,$ and $Y$ a $\!D\!$-vector space.

Given proper subspaces $Y_1,\dots,Y_n$ of $Y$ with $n\leq N,$
suppose inductively that for some $m<n$ we have found an element
$y\in Y$ which does not lie in any of $Y_1,\dots,Y_m.$
If $y$ also does not lie in $Y_{m+1},$ we have our next inductive step.
If, on the other hand, $y\in Y_{m+1},$
take any $y'\notin Y_{m+1},$
and consider the $N$ elements $y'+\alpha y,$ as $\alpha$ runs over $D.$
Clearly, none of these lie in $Y_{m+1},$ and it is easy
to check that at most one can lie in each of $Y_1,\dots,Y_m.$
Since they constitute $N\geq n> m$ elements, at least one lies in
none of these spaces, giving the inductive step.

The second statement is likewise a statement about the lattice
of submodules of $Y,$ so in proving it
we can again assume $V$ a division ring.
Recalling that maximal subspaces of $Y$ are kernels
of elements of the dual space $Y^*=\r{Hom}(Y,{_D}D),$ a
right $\!D\!$-vector-space, we can apply to that space the
left-right dual of the first statement, and get the desired result.
\end{proof}

The next corollary applies the two preceding results to the
modules and map of~\eqref{d.setup}.
Note that the statement refers to both
{\em minimal} idempotents and {\em minimal central} idempotents.

\begin{corollary}\label{C.N-strong}
Suppose $h:{_T}A_S \times {_S}B\to {_T}C$ is as in~\eqref{d.setup},
and satisfies~\eqref{d.nondeg},~\eqref{d.b}, and~\eqref{d.c}.

Let $N_T$ be the minimum of the cardinalities of the
division rings $f\,Tf$ as $f$ ranges over the minimal idempotents
of $T,$ if that minimum is finite, or
the symbol $\infty$ if all those division rings are infinite.
Let $d_T$ be the maximum, as $f$ ranges
over the minimal {\em central} idempotents of $T,$ of the number
of minimal central idempotents $e$ of $S$ such that $fA\,e\neq 0.$
Then for each minimal idempotent
$f$ of $T,$ there is at least one minimal idempotent $e$
of $S$ such that $fA\,e$ is left $\!N_T/d_T\!$-strong as a set of
maps $e\,B\to f\,C.$

Likewise, let $N_S$ be the minimum of the cardinalities of the
division rings $eSe$ as $e$ ranges over the minimal idempotents of $S$
if this is finite, or the symbol $\infty$ if that minimum is infinite,
and $d_S$ the maximum, as $e$ ranges
over the minimal {\em central} idempotents of $S,$
of the number of minimal central idempotents $f$ of $T$
such that $fA\,e\neq 0.$
Then for each minimal idempotent
$e$ of $S,$ there is at least one minimal idempotent $f$
of $T$ such that $fA\,e$ is right $\!N_S/d_S\!$-strong as a set of
maps $e\,B\to f\,C.$
\end{corollary}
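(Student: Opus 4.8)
The plan is to prove the first assertion in detail; the second is then obtained by the mechanical left--right dual argument, using~\eqref{d.b}, Definition~\ref{D.rt_strong}, and the second halves of Lemmas~\ref{L.not_union} and~\ref{L.strong_union} wherever the argument below uses~\eqref{d.c}, Definition~\ref{D.lt_strong}, and their first halves. So fix a minimal idempotent $f$ of $T,$ let $\bar f$ be the minimal central idempotent of $T$ with $f\leq\bar f,$ and write $D=fTf,$ a division ring with $\card(D)\geq N_T;$ then $fC$ is a left $\!D\!$-vector space. The heart of the matter is the claim: for every nonzero $y\in fC$ there exist a minimal central idempotent $e_i$ of $S$ and an element $a\in fAe_i$ with $D\cdot a(B)=Dy.$

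To prove the claim, observe that $C_0=Ty$ is a simple submodule of $C$ (it is a nonzero homomorphic image of the minimal left ideal $Tf$) and that $fC_0=(fTf)y=Dy\neq 0.$ By~\eqref{d.c} there is a nonzero $a_0\in A$ with $a_0(B)\subseteq C_0$ and $Ta_0(B)=C_0;$ since $fC_0=Dy$ is nonzero and equals the $\!D\!$-span of the images $(fta_0)(B)$ over $t\in T,$ some $fta_0\in fA$ is nonzero with $D\cdot(fta_0)(B)=Dy;$ and decomposing $fta_0=\sum_i(fta_0)e_i$ over the minimal central idempotents $e_i$ of $S,$ at least one summand $a=(fta_0)e_i$ still satisfies $D\cdot a(B)=Dy.$ This proves the claim, so the set $W=\bigcup_i fAe_i$ of additive maps $B\to fC$ has the property that every simple (i.e.\ one--dimensional) submodule of $fC$ equals $D\cdot a(B)$ for some $a\in W.$ Since by Lemma~\ref{L.not_union} (applied to the division ring $D$ acting on $fC$) the space $fC$ is not a union of $\leq\card(D)$ proper subspaces, and $\card(D)\geq N_T,$ it follows that $W$ is left $\!N_T\!$-strong. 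Finally, $fAe_i\neq 0$ forces $\bar fAe_i\neq 0,$ so at most $d_T$ of the $fAe_i$ are nonzero; Lemma~\ref{L.strong_union} then produces a minimal central idempotent $e_0$ of $S$ for which $fAe_0$ is left $\!N_T/d_T\!$-strong as a set of maps $B\to fC$ --- equivalently, as maps $e_0B\to fC,$ the image of any $a\in fAe_0$ being the same whether one feeds it $B$ or $e_0B.$

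The step I expect to be most delicate is the passage from the central idempotent $e_0$ to an honest minimal idempotent $e$ without weakening the constant $N_T/d_T.$ Writing $e_0=e^{(1)}+\dots+e^{(\ell)}$ as a sum of minimal idempotents and trying to treat $fAe_0$ as the union $\bigcup_k fAe^{(k)}$ does not work --- that union is a proper subset of $fAe_0$ --- and even a patched version would cost an unwanted factor $\ell.$ Instead I would fix a full set of matrix units $(E_{pq})$ for the simple ring $e_0Se_0$ with $E_{kk}=e^{(k)},$ set $e=e^{(1)},$ and note: (i) for $a\in fAe_0$ one has $a(B)=\sum_k(ae^{(k)})(B)$ (since $e_0=\sum_k e^{(k)}$); and (ii) since $E_{k1}$ carries $e^{(1)}B$ isomorphically onto $e^{(k)}B$ and $h(aE_{k1},b)=h(a,E_{k1}b),$ one gets $(ae^{(k)})(B)=(aE_{k1})(B),$ where $aE_{k1}\in fAe.$ Combining (i) and (ii), $D\cdot a(B)=\sum_k D\cdot(aE_{k1})(B)$ with each $aE_{k1}\in fAe;$ hence if $D\cdot a(B)$ is one--dimensional, some single $D\cdot(aE_{k1})(B)$ already equals it. Applying this to the element $a\in fAe_0$ supplied by left $\!N_T/d_T\!$-strength of $fAe_0$ for a given family of $\leq N_T/d_T$ proper subspaces of $fC,$ and passing to the corresponding $aE_{k1},$ shows that $fAe$ is left $\!N_T/d_T\!$-strong as a set of maps $eB\to fC;$ as $f$ was an arbitrary minimal idempotent of $T$ and $e$ is a minimal idempotent of $S,$ this is exactly the first assertion. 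The only points I leave as routine are the bookkeeping identities among the matrix units and the translation, valid for any vector space, between the submodule--lattice form of Lemma~\ref{L.not_union} and the statements about unions of proper subspaces and one--dimensional subspaces used above.
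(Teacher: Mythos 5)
Your proof is correct, and its core is the same as the paper's: use~\eqref{d.c} to show that every simple submodule of $f\,C$ is the span of the image of some element of $\bigcup_e fA\,e$ (pushing a witness $a_0$ through suitable idempotents on both sides), invoke Lemma~\ref{L.not_union} to upgrade this ``hits every simple submodule'' property to left $\!N_T\!$-strength of the union, and then apply Lemma~\ref{L.strong_union} to isolate a single $e.$ Where you genuinely diverge is in taking the word ``minimal idempotent'' in the conclusion literally: the paper's own proof runs entirely at the level of minimal \emph{central} idempotents $f$ and $e$ (with $f\,Tf$ a simple Artin ring rather than a division ring), which is also how the corollary is actually invoked via~\eqref{d.strong} in the proof of Proposition~\ref{P.main}; the statement's wording is, in effect, a slip. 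Your extra final step -- descending from the minimal central idempotent $e_0$ to a genuinely minimal $e=e^{(1)}$ by means of the matrix units $E_{k1},$ using $h(aE_{k1},b)=h(a,E_{k1}b)$ and $E_{k1}B=e^{(k)}B$ to show that a one-dimensional $D\cdot a(B)$ is already realized by some $aE_{k1}\in fA\,e$ -- is sound, and your observation that the naive decomposition $fA\,e_0=\bigcup_k fA\,e^{(k)}$ fails (it is only a sum, not a union, and Lemma~\ref{L.strong_union} would cost a factor $\ell$) is exactly the right reason such an argument is needed. So you prove the statement as literally written, at the cost of one extra Morita-style reduction the paper does not perform; the paper's shorter proof establishes the (minimal-central) version that is what gets used downstream.
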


\begin{proof}
In the situation of the first assertion, we see from
condition~\eqref{d.c} that for each minimal central idempotent
$f$ of $T,$ the set $f\,A$ of morphisms
$f\,a: B\to f\,C$ $(a\in A)$ has the property
that for every minimal $\!f\,Tf\!$-submodule $C_0\subseteq f\,C,$
there is at least one nonzero $f\,a\in f\,A$ which takes $B$ into $C_0.$
Now the minimal central idempotents $e\in S$ sum to $1,$ so there
is at least one such $e\in S$ such that $f\,a\,e\neq 0.$
This map clearly still carries $B$ into $C_0;$ so we conclude that
for every minimal $C_0\subseteq f\,C,$ some nonzero
element of $\bigcup_e (fA\,e-\{0\})$ has image in $C_0,$ where the
union is over the minimal central idempotents $e$ of $S.$
By Lemma~\ref{L.not_union}, this shows that if we write $D$
for the division ring (unique up to isomorphism) given by
$f'Tf'$ where $f'$ is any minimal idempotent of $f\,Tf,$
then $\bigcup_e (fA\,e-\{0\})$ is left $\!\card(D)\!$-strong; hence
left $\!N_T\!$-strong.

Now by definition of $d_T,$ there are at most $d_T$ minimal
central idempotents $e$ such that $fA\,e\neq 0;$ so
$\bigcup_e (fA\,e-\{0\})$ involves at most $d_T$ nonempty sets
$fA\,e-\{0\}.$
Hence by Lemma~\ref{L.strong_union}, at least one of these
sets is left $\!N_T/d_T\!$-strong, as claimed.

The second assertion is proved in the analogous fashion.
\end{proof}

The above corollary will enable us to prove our
desired generalizations of~\eqref{d.cm_dim} and~\eqref{d.cm_local_dim}
{\em unless} one or more of the division rings
$eSe$ and $f\,Tf$ are finite fields of small cardinality.
(If such a field occurs, $N_T/d_T$ or $N_S/d_S$
may be too small for our arguments to work.)
It is curious that a similar condition in my original proof
of~\eqref{d.cm_dim} and~\eqref{d.cm_local_dim} was
eliminated by de~Seguins Pazzis's argument;
but we shall see by example, in \S\ref{S.misc},
that the corresponding condition in the present
situation cannot be dropped.

While these cardinality conditions are not very restrictive,
and are needed for the result to hold, we come now to an
embarrassingly restrictive condition, needed for the proofs
of the results of the next section,
though I have no example showing that those results fail without it.
The condition is awkward to state in maximum generality.
A fairly natural special case is the hypothesis that
in~\eqref{d.setup},
\begin{equation}\begin{minipage}[c]{35pc}\label{d.small_algcl}
The semisimple Artinian rings $S$ and $T$ are finite-dimensional
algebras over a common algebraically closed field $k,$
and the induced actions of $k$ on the two sides of the bimodule
$_T A_S$ are the same.
\end{minipage}\end{equation}
Actually, we need the assumption that $k$ is algebraically
closed only to make the simple factors of $S$ and $T$
full matrix algebras over $k;$ so we can instead put
that assumption on the table, as the condition
\begin{equation}\begin{minipage}[c]{35pc}\label{d.small_mxs}
For some field $k,$ each of the semisimple Artinian rings $S$ and $T$
is a direct product of full matrix algebras over $k,$ and
the induced actions of $k$ on the two sides of the bimodule
$_T A_S$ are the same.
\end{minipage}\end{equation}
A condition that is still more general (as we shall show in the
next lemma), and will suffice for our purposes, is
\begin{equation}\begin{minipage}[c]{35pc}\label{d.small_both}
If $a\in A$ is a nonzero element whose image $a\,B$ is contained
in a simple submodule of $C,$ then there exists nonzero $a'\in TaS$
whose kernel contains a maximal submodule of $B;$ and likewise
if $a\in A$ is a nonzero element whose kernel contains
a maximal submodule of $B,$ then there exists nonzero $a'\in TaS$
whose image is contained in a simple submodule of $C.$
\end{minipage}\end{equation}
In fact, we can make do with the following still weaker
(though still wordier) condition.
\begin{equation}\begin{minipage}[c]{35pc}\label{d.small_either}
For each minimal central idempotent $e\in S$ and
minimal central idempotent $f\in T$
such that $f\,A\,e\neq\{0\},$ it is {\em either} true that
for every $a\in fA\,e$ such that the induced map $e\,B\to f\,C$ has
image in a simple submodule of $f\,C,$ some nonzero
$a'\in TaS$ has kernel containing a maximal submodule of $B,$
{\em or} that for every $a\in fA\,e$ such that the induced map
$e\,B\to f\,C$ has kernel containing a maximal submodule of $B,$
some nonzero $a'\in TaS$ has image in a simple submodule of $f\,C.$
(But which of these is true may vary with the pair $(f,e).)$
\end{minipage}\end{equation}

Let us note the implications among these conditions.

\begin{lemma}\label{L.conditions}
For $h:{_T}A_S \times {_S}B\to {_T}C$ a bilinear map as
in~\eqref{d.setup} which
satisfies~\eqref{d.nondeg},
one has the implications \eqref{d.small_algcl}$\!\implies\!$%
\eqref{d.small_mxs}$\!\implies\!$%
\eqref{d.small_both}$\!\implies\!$\eqref{d.small_either}.
\end{lemma}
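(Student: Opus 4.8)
The plan is to verify the three implications one at a time, each essentially unwinding the definitions and using a standard structural fact. The real content is only in the middle implication; the outer two are routine.

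For $\eqref{d.small_algcl}\implies\eqref{d.small_mxs}$: if $S$ is a finite-dimensional semisimple algebra over an algebraically closed field $k$, then by the Artin–Wedderburn theorem $S$ is a product of matrix rings over division algebras finite-dimensional over $k$, and each such division algebra is $k$ itself since $k$ is algebraically closed. The same applies to $T$, so $S$ and $T$ are products of full matrix algebras over $k$; the statement about the two actions of $k$ on $_TA_S$ being equal is carried over verbatim. First I would dispose of this in one or two sentences.

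For $\eqref{d.small_mxs}\implies\eqref{d.small_both}$: this is the step I expect to be the main obstacle, though a mild one. Suppose $S=\prod_i\r{Matr}_{n_i,n_i}(k)$ and $T=\prod_j\r{Matr}_{m_j,m_j}(k)$, with $k$ acting the same way on both sides of $_TA_S$. The key observation is that under this hypothesis the left-module structure and the right-module structure on a simple or semisimple piece are governed by the same field $k$, so one can pass between ``kernel containing a maximal submodule of $B$'' and ``image in a simple submodule of $C$'' by twisting $a$ on left and right by minimal idempotents and, crucially, by elements that permute or combine the simple summands. Concretely: given nonzero $a\in A$ with $a\,B$ inside a simple submodule $C_0$ of $C$, decompose $a=\sum f_j\,a\,e_i$; picking a pair $(i,j)$ with $f_j\,a\,e_i\neq 0$, we may replace $a$ by $f_j\,a\,e_i$, so $a\in fAe$ for minimal central idempotents $f,e$, and the image $a(e_iB)\subseteq C_0$ still. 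Now $e_iB$ is a direct sum of copies of the simple left $e_iSe_i$-module $k^{n_i}$ and $fC$ a direct sum of copies of $k^{m_j}$; the map $a$ is given by a matrix over $k$ (using that the two $k$-actions agree), and by choosing a minimal idempotent $e'\le e$ and a suitable right multiplier we can arrange that $a$ composed on the right with an appropriate element of $S$ has kernel exactly a maximal submodule of $B$. Here I would lean on the rank–nullity bookkeeping: a nonzero $k$-linear map out of a finite-dimensional $k$-space can always be pre-composed with an inclusion of a hyperplane-complement to get something with a maximal kernel, and the matrix-algebra structure lets this be realized by multiplication inside $S$. The second half of $\eqref{d.small_both}$ is the mirror image, using minimal idempotents of $T$ and multiplication on the left. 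The hard part is checking that these manipulations stay inside $TaS$ and produce genuinely nonzero elements — this is where the equality of the two $k$-actions is doing the work, so I would state that dependency explicitly.

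For $\eqref{d.small_both}\implies\eqref{d.small_either}$: this is the easiest implication. Condition $\eqref{d.small_both}$ asserts that \emph{both} alternatives in $\eqref{d.small_either}$ hold for every relevant pair $(f,e)$ — indeed $\eqref{d.small_both}$ is stated globally but specializes to each $fAe$ since a nonzero element of $TaS$ for $a\in fAe$ can be cut down to one in $f(TaS)e$ without killing it — so in particular the disjunction ``either $\dots$ or $\dots$'' in $\eqref{d.small_either}$ holds. I would spell out in one sentence that $\eqref{d.small_both}$ gives the first disjunct (say) for every pair, hence a fortiori the ``either/or''. I expect this last step to take only a few lines, the $\eqref{d.small_algcl}\implies\eqref{d.small_mxs}$ step to be nearly immediate, and the middle step to be the one that requires care with the matrix-algebra arithmetic.
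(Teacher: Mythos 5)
Your outer two implications are fine and match the paper: \eqref{d.small_algcl}$\implies$\eqref{d.small_mxs} is Artin--Wedderburn plus algebraic closure, and \eqref{d.small_both}$\implies$\eqref{d.small_either} is immediate since \eqref{d.small_both} supplies both disjuncts for every pair $(f,e).$ The opening reduction in your middle implication (cut $a$ down to a nonzero $f\,a\,e$ for minimal central idempotents and note that the image stays in $C_0)$ is also exactly what the paper does.

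The gap is in the heart of \eqref{d.small_mxs}$\implies$\eqref{d.small_both}. Your proposed mechanism --- that ``a nonzero $k$-linear map can always be pre-composed with an inclusion of a hyperplane-complement to get something with a maximal kernel, and the matrix-algebra structure lets this be realized by multiplication inside $S$'' --- does not work, for two reasons. First, right multiplication $a\mapsto a\,s$ composes $a$ with the action of $s$ on $B,$ and the additive endomorphisms of $B$ realized this way are only those in the image of $eSe\cong\r{Matr}_{m,m}(k)$ acting on $B\cong B'^m;$ these never include a projection of $B$ onto a line unless $B'$ (that is, $e'B$ for a minimal idempotent $e')$ happens to be $1\!$-dimensional over $k.$ So no ``suitable right multiplier'' manufactures a hyperplane kernel by itself. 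Second, and more seriously, if your claim worked for an arbitrary nonzero $a,$ condition~\eqref{d.small_both} would be automatic, whereas it is a genuinely restrictive hypothesis (cf.\ the example of \S\ref{SS.small}); your sketch carries along the assumption that $a\,B$ lies in a \emph{simple} submodule but never uses it. The missing idea is the paper's: identify $fA\,e$ with $\r{Matr}_{n,m}(A')$ for $A'$ a space of $k$-linear maps $B'\to C'$ (here is where equality of the two $k$-actions enters), note that $C_0=(C'_0)^n$ with $\dim_k C'_0=1,$ so \emph{every entry} of the matrix $a$ is a map $B'\to C'$ of rank $\leq 1;$ then multiply by matrix units on \emph{both} sides, $\bar a=e_{1,i}\,a\,e_{j,1},$ to obtain a nonzero element of $TaS$ all of whose entries are scalar multiples of a single rank-$\!1\!$ map $a',$ so that $\bar a$ annihilates the maximal submodule $(\ker a')^m\subseteq B.$ Without the rank-$\!1\!$ observation there is no reason for any element of $TaS$ to have kernel containing a maximal submodule, so this is a missing idea rather than mere bookkeeping.
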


\begin{proof}
\eqref{d.small_algcl}$\implies$\eqref{d.small_mxs} follows from the
standard description of the structures of semisimple Artin rings,
and \eqref{d.small_both}$\!\implies\!$\eqref{d.small_either} is clear.
Let us prove \eqref{d.small_mxs}$\implies$\eqref{d.small_both}.

Suppose as in~\eqref{d.small_both} that $a\neq 0,$ and $a\,B$ is
contained in a simple submodule $C_0\subseteq C.$
Since the identity elements of $S$ and $T$ are sums of
minimal central idempotents, we can find such idempotents
$e\in S$ and $f\in T$ such that $f\,a\,e\neq 0;$ and we will
have $f\,a\,e\,B\subseteq f\,a\,B\subseteq f\,C_0\subseteq C_0.$
Hence, replacing $a$ by some $a'=f\,a\,e$ if necessary,
we may assume without loss of generality that $a\in fA\,e$
for such a pair of idempotents.

Now by assumption, $eSe$ and $f\,Tf$ have the forms
$\r{Matr}_{m,m}(k)$ and $\r{Matr}_{n,n}(k)$ for
some positive integers $m$ and $n.$
Identifying them with these matrix rings,
it is easy to verify that there exist finite-dimensional
$\!k\!$-vector-spaces $B',$ $C'$ such that we can
identify $B$ and $C$ with the spaces $B'^m$ and $C'^n$ of
column vectors of elements of $B'$ and $C',$ made into modules over
$S=\r{Matr}_{m,m}(k)$ and $T=\r{Matr}_{n,n}(k)$ in the natural
way; that $A$ can then be identified
with $\r{Matr}_{n,m}(A')$ where $A'$ is a $\!k\!$-vector-space
of $\!k\!$-linear maps $B'\to C',$ and finally, that the simple
submodule $C_0\subseteq f\,C$ will have the form $C_0'^n$ for
some $\!1\!$-dimensional subspace $C'_0\subseteq C'.$
(Explicitly, letting $e'$ and $f'$ denote minimal idempotents
of $S$ and $T,$ say those given by the matrix units $e_{11}$
of their matrix representations, we can take $B'=e'B,$
$C'=f'C,$ $C'_0=f'C_0,$ and $A'=f'A\,e'.)$

Thus, our element $a\in fA\,e$ will be an $n\times m$ matrix of linear
maps $e\,B'\to f\,C',$ each having range in $C'_0.$
We can now choose $\bar{a}\in TaS-\{0\}$
to be nonzero and have all components
in some $\!1\!$-dimensional subspace $k\,a'\subseteq A'.$
(E.g., we can let $\bar{a}$ be a product $e_{1,i}\,a\,e_{j,1}$ such
that the $(i,j)$ component of $a$ is nonzero.)
Since $a': B'\to C'$ has range in the $\!1\!$-dimensional
subspace $C'_0$ of $C',$ it is a rank-$\!1\!$ $\!k\!$-linear map, hence
has kernel $B'_0\subseteq B$ of codimension $1.$
Hence $\bar{a},$ having all components in $k\,a',$
will have
kernel containing the maximal proper submodule
$(B'_0)^m\subseteq B,$ as required.

The second assertion of~\eqref{d.small_both} is proved similarly.
\end{proof}

\section{Minimal faithful modules over left Artin rings}\label{S.lt_art}

Recall that if $G$ is a finite graph, its {\em Euler
characteristic} $\chi(G)$ is the number of vertices
of $G$ minus the number of edges, an integer which may be
positive, negative or zero.
Recall also that a {\em bipartite} graph is a graph whose vertex-set is
given as the union of two specified sets, such that
every edge connects a member of one set with a member of the other.
We shall call those sets (nonstandardly) the {\em left} and {\em right}
vertex-sets of~$G.$

The hard work of this section comes right at the
beginning: proving the following
noncommutative analog of Proposition~\ref{P.cm_bilin},
or more precisely, of Corollary~\ref{C.cm}.

\begin{proposition}\label{P.main}
Suppose $h:{_T}A_S \times {_S}B\to {_T}C$ is a bilinear map as
in~\eqref{d.setup}, which satisfies~\eqref{d.nondeg},
\eqref{d.b}, \eqref{d.c} and~\eqref{d.small_either}.
For notational convenience we shall assume $S$ and $T$ disjoint.

Let $N_S,$ $d_S,$ $N_T$ and $d_T$ be defined as
in Corollary~\ref{C.N-strong}.
Further, let $l_S$ denote the maximum of the values $\lt_{eSe}(e\,B)$
as $e$ ranges over the minimal central idempotents $e\in S,$ and
$l_T$ the maximum of $\lt_{f\,Tf}(f\,C)$ as $f$ ranges
over the minimal central idempotents $f\in T;$ and assume that
\begin{equation}\begin{minipage}[c]{35pc}\label{d.cardD}
$N_T\ \geq\ d_T\,l_S,$\quad and\quad $N_S\ \geq\ d_S\,l_T.$
\end{minipage}\end{equation}

Finally, let $G$ be the bipartite graph whose right vertex-set is the
set of minimal central idempotents $e\in S$ satisfying $e\,B\neq 0$
\textup{(}equivalently, $A\,e\neq 0),$
whose left vertex-set is the
set of minimal central idempotents $f\in T$ satisfying $f\,C\neq 0$
\textup{(}equivalently, $f\,A\neq 0),$ and
such that two such vertices $e,$ $f$ are connected
by an edge $(f,e)$ if and only if $fA\,e\neq\{0\}.$

Then
\begin{equation}\begin{minipage}[c]{35pc}\label{d.P.main}
$\lt({_S}B)\ +\ \lt({_T}C)\ \leq\ \lt({_T}A_S)\ +\ \chi(G).$
\end{minipage}\end{equation}
\end{proposition}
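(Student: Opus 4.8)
The overall strategy is to reduce Proposition~\ref{P.main} to the bilinear-maps Corollary~\ref{C.cm} by decomposing everything along the minimal central idempotents of $S$ and $T$, and then to reassemble the local inequalities, bookkeeping carefully so that the bipartite graph $G$ emerges. First I would fix notation: write $1_S=e_1+\dots+e_m$, $1_T=f_1+\dots+f_n$ as sums of minimal central idempotents, discarding those $e_i$ with $e_i B=0$ and those $f_j$ with $f_j C=0$ (so the surviving indices are exactly the vertex sets of $G$), and set $B_i=e_iB$, $C_j=f_jC$, $A_{ji}=f_jAe_i$. Then $\lt({_S}B)=\sum_i \lt_{e_iSe_i}(B_i)$, $\lt({_T}C)=\sum_j\lt_{f_jTf_j}(C_j)$, and $\lt({_T}A_S)=\sum_{i,j}\lt(A_{ji})$ — here the length of $A_{ji}$ is taken appropriately, and a small point to check is that each nonzero $A_{ji}$ has length $\geq 1$, which is what makes the edge-count of $G$ a lower bound for $\lt(A)$ minus the vertex contributions. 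The target inequality~\eqref{d.P.main} rearranges to $\sum_i\lt(B_i)+\sum_j\lt(C_j)\leq\sum_{(f,e)\in E(G)}\lt(A_{ji})\ +\ \#\{\text{vertices}\}$, so it suffices to distribute the vertex "budget" of $G$ so that each right vertex $e_i$ absorbs the cost $\lt(B_i)$, each left vertex $f_j$ absorbs $\lt(C_j)$, and each edge $(f_j,e_i)$ contributes $\lt(A_{ji})-1$ on the nose when we can get a Corollary~\ref{C.cm}-type bound there.

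The heart of the argument is to produce, for each vertex, a good edge to charge against. Here is where Corollary~\ref{C.N-strong} and Lemma~\ref{L.strong_union} do their work: for each minimal idempotent $f$ of $T$ there is a minimal idempotent $e$ of $S$ with $f A e$ \emph{left $N_T/d_T$-strong} as maps $e_iB\to f_jC$, and the hypothesis~\eqref{d.cardD} guarantees $N_T/d_T\geq l_S\geq\lt(B_i)$, which is exactly the number of proper submodules we need to dodge simultaneously. The plan is to use left $l_S$-strength to extract, inside a single block $A_{ji}$, enough elements $a$ with $Ta(B)$ a simple submodule of $C_j$, not lying in any of a prescribed list of proper submodules, so that their images span $C_j$ — giving, via condition~\eqref{d.c} transplanted to the block, one half of the Corollary~\ref{C.cm} hypothesis for the restricted bilinear map $A_{ji}\times B_i\to C_j$; symmetrically, right $l_T$-strength (using the second half of~\eqref{d.cardD}) supplies the kernels-are-maximal condition. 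But the subtle point — and this is why condition~\eqref{d.small_either} is in the statement — is that a priori one gets left-strength in one block and right-strength in a \emph{different} block; \eqref{d.small_either} says that in each block one of the two conditions propagates to the other, so that within a single $A_{ji}$ we can arrange both hypotheses of Corollary~\ref{C.cm} and conclude $\lt(A_{ji})\geq\lt(B_i)+\lt(C_j)-1$ (after Morita-reducing the matrix rings $e_iSe_i$, $f_jTf_j$ to their underlying division rings, exactly as in the proof of Lemma~\ref{L.conditions}).

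Having one such "good" edge per vertex, I would set up the final counting as a graph-theoretic averaging argument. For each right vertex $e_i$, condition~\eqref{d.c} (or rather its block form) forces the union $\bigcup_j A_{ji}$ to be "strong enough" that \emph{some} incident edge $(f_j,e_i)$ carries a full copy of $\lt(C_j)$'s worth of the relevant simple submodules through a left-strong block; dually for left vertices via~\eqref{d.b}. This lets me orient the relevant edges and argue that the total $\sum_{(f,e)\in E}(\lt(A_{ji})-1)$ dominates $\sum_i\lt(B_i)+\sum_j\lt(C_j)-(\#E)$ once we add back $+\#E$; rearranging and adding $\#V-\#E=\chi(G)$ on the right gives~\eqref{d.P.main}. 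The main obstacle, I expect, is precisely this last assembly: making sure the per-vertex good edges can be chosen coherently (e.g.\ handling a vertex of degree $>1$, where $\lt(B_i)$ must be paid once but there may be several incident edges competing for the "$-1$" bookkeeping), and verifying that no edge is over-charged — this is where the normalization $N_T\geq d_T l_S$, $N_S\geq d_S l_T$ is used in full force, since $d_S,d_T$ are exactly the maximum vertex degrees on the two sides and $l_S,l_T$ the maximum module lengths we must dodge. I would also keep an eye on the edge case where a block $A_{ji}$ is nonzero but one of $B_i$, $C_j$ has length $0$, which the pruning of vertices at the start is designed to rule out.
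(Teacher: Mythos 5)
Your opening moves --- decomposing along minimal central idempotents and using Corollary~\ref{C.N-strong} with~\eqref{d.cardD} to find a ``good'' block per vertex --- match the start of the paper's proof, but the core of your plan has a genuine gap. You propose to prove, for each good edge $(f_j,e_i)$, the full inequality $\lt(A_{ji})\geq\lt(B_i)+\lt(C_j)-1$ by Morita-reducing to division rings and invoking Corollary~\ref{C.cm}. That is not available, for two reasons. First, the hypotheses of Corollary~\ref{C.cm} (every maximal submodule of $B_i$ is a kernel of some element, every simple submodule of $C_j$ is an image) do \emph{not} descend to a single block: conditions~\eqref{d.b} and~\eqref{d.c} hold only globally, and after Lemma~\ref{L.strong_union} all one gets for a single block is left (resp.\ right) $\!N/d\!$-strength, which is strictly weaker; moreover a given block is guaranteed only \emph{one} of the two strength conditions, not both. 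Second, Corollary~\ref{C.cm} is proved only over a commutative field; under~\eqref{d.small_either} alone the underlying division rings of the simple factors $e_iSe_i$ and $f_jTf_j$ need not be fields, and the division-ring analogue of Corollary~\ref{C.cm} is precisely the open Question~\ref{Q.loc_bilin} of the paper. The paper itself notes (\S\ref{SS.better_N-str}) that even in its final one-edge case the argument must differ from that of Proposition~\ref{P.cm_bilin}: $\!N\!$-strength lets one find elements whose images avoid given submodules, but not elements whose images lie together \emph{inside} a prescribed proper submodule, which is what Case~1 of that earlier proof requires.

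You have also misread the role of~\eqref{d.small_either}: it is not used to transfer left- and right-strength into a common block (that mismatch is resolved combinatorially), but element-wise, to convert an $a$ whose kernel contains a maximal submodule of $B$ into a nonzero $a'\in TaS$ whose image lies in a simple submodule of $C$ (or dually), so that one can build elements enjoying \emph{both} properties at once. Finally, the ``assembly'' you defer is the heart of the matter, and the paper resolves it not by averaging but by a reduction process: mark one left-strong edge per left vertex and one right-strong edge per right vertex; delete unmarked edges (each costs at least $1$ in $\lt({_T}A_S)$ while raising $\chi(G)$ by $1,$ so~\eqref{d.P.main} is preserved); prune a leaf vertex whose only incident edge is its own singly-marked edge, using a chain of subbimodules $\sum_{i\leq j}T\,a_i\,S$ to show $\lt({_T}(fA\,e)_S)\geq\lt({_T}f\,C)$ (dually for $B$); and observe that what remains decomposes into even loops (where $\chi=0$ and the per-edge estimate suffices) and single doubly-marked edges, where a new chain argument of length $m+n-1$ using~\eqref{d.small_either} is carried out. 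Without a substitute for this reduction and for that final chain argument, your outline does not yield the proposition.
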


\begin{proof}
The parenthetical equivalences in the definition of $G$ follow
from~\eqref{d.nondeg},~\eqref{d.b} and~\eqref{d.c}.
Combining Corollary~\ref{C.N-strong} with our
hypothesis~\eqref{d.cardD}, we find that
\begin{equation}\begin{minipage}[c]{35pc}\label{d.strong}
For each minimal idempotent
$f$ of $T,$ there is at least one minimal idempotent $e$
of $S$ such that $fA\,e$ is left $\!l_S\!$-strong as a set of
maps $e\,B\to f\,C;$ and for each minimal idempotent
$e$ of $S,$ there is at least one minimal idempotent $f$
of $T$ such that $fA\,e$ is right $\!l_T\!$-strong as a set of
maps $e\,B\to f\,C.$
\end{minipage}\end{equation}

We shall now perform a series of reductions and decompositions on our
system ${_T}A_S \times {_S}B\to {_T}C,$ verifying at each stage that
if the inequality corresponding to~\eqref{d.P.main} holds
for our simplified system(s), then it also holds for the
original system; and, finally, we shall establish that
inequality for the very simple sorts of system we end up with.

In preparation, let us harness~\eqref{d.strong}
by choosing, arbitrarily, for each minimal
central idempotent $f\in T,$ {\em one} minimal central idempotent
$e\in S$ such that $fA\,e$ is left $\!l_S\!$-strong, and call
$(f,e)$ the {\em left-marked} edge of the graph $G$ associated
with the vertex $f;$ and similarly, for each minimal
central idempotent $e\in S,$ let us choose a minimal central idempotent
$f\in T$ such that $fA\,e$ is right $\!l_T\!$-strong,
and call $(f,e)$ the {\em right-marked} edge of $G$ associated
with the vertex $e.$
Some edges may be both right- and left-marked
(for their respective right and left vertices).

We begin our
reductions by considering any edge $(f,e)\in G$ which is neither
right- nor left-marked, and seeing what happens if we drop the
summand $fA\,e$ from $A;$
i.e., replace $A$ with $(1-f)A + A(1-e);$ and thus drop
the edge $(f,e)$ from $G,$ leaving the rest of our system unchanged.
Because $(f,e)$ is neither right nor left marked,
condition~\eqref{d.strong} has not been lost.
(The constant $d_S$ and/or $d_T$ may have decreased by $1,$
but we don't have to think about this, because our use of
these constants was only to obtain~\eqref{d.strong},
which has been preserved.)
The removal of $fA\,e$ has no effect on the left-hand
side of~\eqref{d.P.main}, while on the right-hand side,
it decreases $\lt({_T}A_S)$ by
$\lt({_T}(fA\,e)_S)\geq 1,$ and increases $\chi(G)$ by $1.$
Hence if the new system satisfies~\eqref{d.P.main}, then
the original system, whose right-hand side is $\geq$ that of
the new system, also did.

Hence by induction, the task of proving~\eqref{d.P.main} is
reduced to the case where {\em every} edge of $G$ is left
and/or right marked.

Suppose, next, that there is some left vertex $f$ of $G$ such
that the only edge adjacent to $f$ is its associated left-marked
edge, say $(f,e),$ and such that this is {\em not} also right-marked,
and consider what happens if we remove both the vertex $f$ and
the edge $(f,e);$ i.e., replace $C$ by $(1-f)C,$ and $A$ by $(1-f)A.$

Clearly, the {\em remaining} vertices and edges continue to
witness condition~\eqref{d.strong}.
Our new system also has the same Euler characteristic as the old one,
since just one vertex and one edge have been removed from the graph.

To see how~\eqref{d.P.main} is affected, let $d=\lt({_T}f\,C).$
I claim that we can find
elements $a_1,\dots,a_d\in fA\,e$ such that the submodules
$T\,a_i\,B\subseteq f\,C$ are simple and their sum is direct.
Indeed, assuming we have constructed $a_1,\dots,a_j$ with $j<d,$
the submodule $\bigoplus_{i\leq j} T\,a_i\,B\subseteq f\,C$
will have length $\leq j< d=\lt({_T}f\,C),$ so
the fact that $fA\,e$ is left $\!1\!$-strong (a weak consequence
of the fact that $(f,e)$ is left marked, hence that $fA\,e$ is
left $\!l_S\!$-strong) allows us to find $a_{j+1}$
with $T\,a_{j+1}\,B$ simple and not contained
in that proper submodule; and since it is simple,
its sum with that submodule is direct.
Once we have $a_1,\dots,a_d,$ we see that
the sub-bimodules $\sum_{i\leq j} T\,a_i\,S\subseteq A$ $(j\leq d)$
form a chain of length $d.$
Hence $\lt({_T}(fA\,e)_S)\geq d=\lt({_T}f\,C),$ from which
we see that replacing $A$ with $(1-f)A$ decreases
the right-hand side of~\eqref{d.P.main}
by at least as much as replacing $C$ with $(1-f)C$ decreases
the left-hand side.
So again, if the new system satisfies~\eqref{d.P.main},
so did the old one.

Similarly, if $G$ has some right vertex $e$ such
that the only edge adjacent to $e$ is its associated right-marked
edge $(f,e),$ and this edge is not also left-marked,
we find that we can remove $fA\,e$ from $A,$
and $e\,B$ from $B,$ and that if the resulting system
satisfies~\eqref{d.P.main}, so does our original system.
The proof is the same, except that where above we used
an increasing family of submodules
$\bigoplus_{i\leq j} T\,a_i\,B\subseteq f\,C,$
we now use a decreasing family of submodules
$\bigcap_{i\leq j} \r{ker}(a_i\,S)\subseteq e\,B.$

Repeating these two kinds of
reductions until no more instances are possible,
we are left with a system in which every
vertex not only hosts its own marked edge, but also hosts
the marked edge of {\em at least one} other vertex (which may or
may not be the same as its own marked edge).
By counting vertices, we immediately see that in the preceding
sentence, ``{\em at least one}'' can be replaced by
``{\em exactly one}''.
From this, it is easy to see that each connected component of $G$
is now either a loop of even length $>2,$ or a graph having just
two vertices, and a single edge which is marked for both of these.

It follows that our
system ${_T}A_S \times {_S}B\to {_T}C$
decomposes into a direct sum of subsystems corresponding to
those connected components, and that~\eqref{d.P.main}
will be the sum of the corresponding inequalities for those components.
Hence, it will suffice to prove~\eqref{d.P.main}
in the two cases where $G$ is a loop, and where $G$ has
just a single edge.
The $l_S$ and $l_T$ for each such system are $\leq$
the $l_S$ and $l_T$ for our original system, so~\eqref{d.strong} will
hold for these systems, because it held for the original system.

If $G$ is a loop, then each edge is marked for only
one vertex, and the proof is quick:
The Euler characteristic $\chi(G)$ is zero, while for each
vertex, the bimodule corresponding to its marked edge
has at least the length of the module corresponding to
that vertex, by the same ``$\sum_{i\leq j} T\,a_i\,S$''
arguments used in the preceding reduction.
Summing these inequalities over the vertices, we have~\eqref{d.P.main}.

We are left with the case where $G$ has just one
right vertex, $e,$ one left vertex, $f,$ and the single edge $(f,e).$
Thus, $B=eB$ and $C=fC.$
Let
\begin{equation}\begin{minipage}[c]{35pc}\label{d.m,n}
$m\ =\ \lt({_S}B),\qquad n\ =\ \lt({_T}C).$
\end{minipage}\end{equation}
Note that the fact that $(f,e)$ is both left- and right-marked
tells us that $A$ is both left $\!l_S\!$-strong and
right $\!l_T\!$-strong.

It is now that we will use our hypothesis~\eqref{d.small_either}.
Let us begin by assuming the second of the two alternatives
it offers, which in this situation says that
whenever an $a\in A$ has kernel containing a
maximal submodule of $B,$ then some nonzero element of
$TaS$ has image in a simple submodule of $C.$
Under this assumption, we begin by constructing,
for $m$ as in~\eqref{d.m,n}, elements $a_1,\dots,a_m\in A$ such that
\begin{equation}\begin{minipage}[c]{35pc}\label{d.a1...am}
$T\,a_1\,S,\dots,T\,a_m\,S$
have for kernels maximal submodules of $B,$ none of which contains
the intersection of the kernels of the others, and each $T\,a_i\,S$
has for image a simple submodule of~$C.$
\end{minipage}\end{equation}
To see that we can do this, suppose inductively that we have
constructed $i<m$ elements
$a_1,\dots,a_i$ with these properties.
Since $i<m=\lt({_S}B),$ the intersection of the kernels
of $T\,a_1\,S,\dots,T\,a_i\,S$ is a nonzero submodule
of $B,$ hence since $A,$ being right $\!l_T\!$-strong, is
in particular right $\!1\!$-strong,
we can find $a\in A$ such that the kernel of $a\,S$ is
a maximal submodule $B_0\subseteq B$ and does
not contain that intersection.
Now any $a'\in T\,a_i\,S-\{0\}$ will still annihilate $B_0;$
hence, if it is nonzero, $T\,a'\,S$ must have exactly that annihilator.
By our assumption from~\eqref{d.small_either}, we can
find such a nonzero $a'$ which has image in a simple submodule of $C.$
Taking for $a_{i+1}$ this element, we have our
desired inductive step; for the intersection
of the kernels of $T\,a_1\,S,\dots,T\,a_{i+1}S$ has co-length $i+1$
in $B,$ hence none can contain the intersection of the kernels
of the others.
Thus, we get~\eqref{d.a1...am}.

I claim next that we can find $n-1$ more
elements, $a'_1,\dots,a'_{n-1}\in A,$ where each $a'_j$ satisfies
\begin{equation}\begin{minipage}[c]{35pc}\label{d.tricky}
$T\,a'_j\,B$ is a simple submodule of $C,$ and
for each $i=1,\dots,m,$ we have\\
$T\,a'_j\,B\ \not\subseteq\ T\,a_i\,B+T\,a'_1\,B+\dots+T\,a'_{j-1}\,B.$
\end{minipage}\end{equation}
To see this, suppose inductively that
$a'_1,\dots,a'_{j-1}$ have been chosen.
For each $i\leq m,$ both
$T\,a_i\,B$ and each of $T\,a'_1\,B,\dots, T\,a'_{j-1}\,B$
are simple submodules of $C,$ and there are $1+(j-1)=j<n$ of
these, so their sum is a proper submodule.
As $i$ ranges from $1$ to $m$ we get $m=l_S$ such proper submodules;
so as $A$ is left $\!l_S\!$-strong, our $a'_j$ can be chosen
to satisfy~\eqref{d.tricky}.

Let us now show that as we add up successively the subbimodules
$T\,a_1\,S+\dots+T\,a_m\,S+T\,a'_1\,S + \dots + T\,a'_{n-1}\,S$ of $A,$
we get a chain of length $m+n-1.$
The first $m$ steps are distinct
by comparison of their kernels in $B,$ in view of~\eqref{d.a1...am}.
If we had equality somewhere in the next $n-1$ steps, this
would mean that for some $j<n$ we would have
\begin{equation}\begin{minipage}[c]{35pc}\label{d.a'_j}
$T\,a'_j\,S\ \subseteq
\ T\,a_1\,S+\dots+T\,a_m\,S+T\,a'_1\,S+\dots+T\,a'_{j-1}\,S.$
\end{minipage}\end{equation}

To get a contradiction, let us, for $i=1,\dots,m,$
write $B_i$ for the intersection of the kernels of
all the $T\,a_j\,S$ other than $T\,a_i\,S.$
By~\eqref{d.a1...am}, each $B_i$ has co-length
$m-1,$ hence is a simple submodule of $B,$ and
no $B_i$ is contained in the sum of the others; so $\sum_i B_i=B.$
Hence, some $B_i$ is not in the kernel of $T a'_j S;$
let us choose such a $B_i$ and apply~\eqref{d.a'_j} to it.
We get, on the left, $T a'_j B_i,$ which by our choice of $i$
is nonzero.
Since it is contained in $T a'_j B,$ which
by~\eqref{d.tricky} is simple, it must be equal thereto.
On the right, by definition of $B_i,$
we loose all of the first $m$ terms other than the $\!i\!$-th.
Replacing the remaining occurrences of $B_i$ on the right by the larger
module $B,$ we get
\begin{equation}\begin{minipage}[c]{35pc}\label{d.contradiction}
$T\,a'_j B\ \subseteq
\ T\,a_i\,B\ +\ T\,a'_1\,B\ +\ \dots\ +\ T\,a'_{j-1}\,B.$
\end{minipage}\end{equation}
But this is one of the relations we chose $a'_j$ to avoid
in~\eqref{d.tricky}.
This contradiction proves that our chain of submodules of $A$
is strictly increasing, hence that $A$ has length at least
$m+n-1=\lt({_S}B)+\lt({_T}C)-\chi(G),$ as required.

If we are in the other case of~\eqref{d.small_either}, we operate
dually, and first obtain $n$ elements $a_1,\dots,a_n$ of $A$ which
determine independent simple submodules of $C,$ and
each of which has kernel containing a maximal submodule
of $B,$ then choose $m-1$ more elements $a'_1,\dots,a'_{m-1},$ such
that the submodule of $B$ which each determines is maximal,
and does not contain the intersection of the submodules
determined by the proceeding members of that list,
intersected with the submodule determined by any one of the $a_i.$
\end{proof}

We can now get our main result.
Since I don't see how to turn~\eqref{d.small_both}
or~\eqref{d.small_either} into a condition on the ring $R,$
I will only give the hypothesis corresponding to~\eqref{d.small_mxs}.

Condition~\eqref{d.cardD} would put a finite lower
bound on the size of $k;$ but since this
bound would involve both the structure of $R$ and that of $M,$
and we would like our restrictions on $M$ to be stated
in terms of the structure of $R,$
I will achieve this simply by requiring $k$ to be infinite.

The two rings $S$ and $T$ of the preceding development
are the same ring $R/J(R)$ in the application below, so we shall
distinguish the corresponding sorts of vertices of our graph with
subscripts ``left'' and ``right''.
We shall regard $\soc(R)$ (the $\!2\!$-sided socle of $R,$ whose
definition we recalled in the paragraph before Theorem~\ref{T.TG})
as a bimodule over $R/J(R);$ as such it will play the role of
the $A$ of Proposition~\ref{P.main}.

\begin{theorem}\label{T.main}
Suppose $k$ is an infinite field and $R$ an overring of $k,$
such that $R$ is finite-dimensional as a left $\!k\!$-vector-space,
$k$ has central image in $R/J(R)$ and centralizes $\soc(R),$
and $R/J(R)$ is a direct product of full matrix
rings over $k$ \textup{(}this last condition being automatic
if $k$ is algebraically closed\textup{)}.

Let $G$ be the bipartite graph whose left vertex-set
consists of symbols $f_{\r{left}}$ for all minimal
central idempotents $f\in R/J(R)$ such that $f\,\soc(R)\neq\{0\},$
whose right vertex-set
consists of symbols $e_{\r{right}}$ for all minimal
central idempotents $e\in R/J(R)$ such that $\soc(R)\,e\neq\{0\},$
and whose edge-set is
$\{(f_\r{left},\,e_\r{right})\mid f\soc(R)\,e\neq\{0\}\,\}.$

Then for any a faithful left $\!R\!$-module $M$ such that
no proper submodule
or proper homomorphic image of $M$ is faithful, we have
\begin{equation}\begin{minipage}[c]{35pc}\label{d.T.main}
$\lt(M/J(R)M)\ +\ \lt(\soc(M))\ \leq\ \lt(\soc(R))\ +\ \chi(G),$
\end{minipage}\end{equation}
where the two lengths on the left are as left $\!R\!$-modules,
while the length on the right is as an $\!(R,R)\!$-bimodule.
\end{theorem}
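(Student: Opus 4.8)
The plan is to derive Theorem~\ref{T.main} from Proposition~\ref{P.main} by specializing the abstract bilinear setup~\eqref{d.setup} to the ring-theoretic data, exactly as Theorem~\ref{T.TG} was derived from Corollary~\ref{C.cm}, but now keeping track of the bimodule structure. First I would set $S = T = R/J(R)$, which is a semisimple Artinian ring because $R$ is left Artinian; take $B = M/J(R)M$ and $C = \soc(M)$ as left $\!R\!$-modules, which factor through $R/J(R)$ since $J(R)$ annihilates both; and take $A = \soc(R)$, regarded as an $\!(R,R)\!$-bimodule, which factors through an $\!(R/J(R),\,R/J(R))\!$-bimodule since $J(R)$ kills $\soc(R)$ on both sides (as $\soc(R)$ is a sum of minimal two-sided ideals, each of which is a semisimple module on each side, hence $J(R)$-annihilated). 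The left action of $R$ on $M$ restricts to a map $\soc(R) \times M/J(R)M \to \soc(M)$: indeed $\soc(R)\cdot M \subseteq \soc(M)$ because $\soc(R)$ is contained in the left socle, so for $a \in \soc(R)$, $Ra$ is a semisimple left submodule of $R$, and $aM$ is a homomorphic image of a direct sum of copies of $Ra/(\text{something})$, hence semisimple, hence in $\soc(M)$; and $\soc(R)\cdot J(R)M = 0$ for the same reason. This gives a balanced bilinear $h: {_T}A_S \times {_S}B \to {_T}C$, and balancedness over $R/J(R)$ is automatic from associativity of the $R$-action. The hypothesis that $k$ centralizes $\soc(R)$ ensures the induced actions of $k$ on the two sides of $A$ agree, so~\eqref{d.small_mxs} holds (using that $R/J(R)$ is a product of full matrix algebras over $k$), whence~\eqref{d.small_either} holds by Lemma~\ref{L.conditions}.

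Next I would check~\eqref{d.nondeg},~\eqref{d.b}, and~\eqref{d.c}, which is where faithfulness and minimality of $M$ enter, just as in the proof of Theorem~\ref{T.TG}. Condition~\eqref{d.nondeg}: if $a \in \soc(R)$ acts as zero on $M/J(R)M$, then $aM \subseteq J(R)M$; but also $aM \subseteq \soc(M)$, and I would argue $\soc(M) \cap J(R)M$ is killed by $\soc(R)$ — more simply, $a \cdot J(R)M = 0$ already shown, so $aM = a(J(R)M + \text{rest})$; actually the clean route is: $a$ acts as zero on $M/J(R)M$ means $aM \subseteq J(R)M \cap \soc(M)$; since $\soc(R)$ annihilates $J(R)M$, iterating gives $a^2 M = 0$, and since $a$ generates a nilpotent-acting map — better: $\soc(R) \cdot M \subseteq \soc(M)$ and $\soc(R) \cdot \soc(M) \subseteq \soc(R) \cdot J(R)M$ portion; the honest fix is that $aM \subseteq J(R)M$ combined with $aM \subseteq \soc(M)$ forces, by considering the two-sided ideal $RaR \subseteq \soc(R)$ and faithfulness, that $a = 0$: since $M$ is faithful, $RaR \neq 0$ requires $RaR \cdot M \neq 0$, and $RaRM = R(aM) \subseteq R(J(R)M \cap \soc(M))$; repeating, $RaRM$ is semisimple and inside $J(R)M$, but a semisimple submodule of $J(R)M$... this needs that $\soc(M) \cap J(R)M$ meeting nontrivially the image is acceptable, so I would instead just note $a$ acting as $0$ on $B$ and $a\cdot\soc(M) = 0$ (since $\soc(R)\cdot\soc(M)=0$ as $\soc(R)\cdot J(R)M=0$ and... no). The cleanest is Gulliksen's own argument and the one in Theorem~\ref{T.TG}: $aM \subseteq J(R)M$, so $a^j M \subseteq J(R)^j M = 0$ for large $j$ since $J(R)$ is nilpotent, but $a \in \soc(R)$ so $Ra$ is semisimple and $a^2 \in Ra \cdot a$; if $aM \subseteq J(R)M$ then for $b = a$ we get $RaM \subseteq J(R)M$, hence $\soc(R) \cdot aM \subseteq \soc(R) J(R)M = 0$, so $aM$ is killed by $\soc(R)$, and since $aM$ is a submodule on which $\soc(R)$ (hence all of $R$, by faithfulness localized — no). I will follow verbatim the parenthetical reasoning given in the proof of Theorem~\ref{T.TG}: nonzero $a$ acting as zero would mean $a \in$ annihilator of $M$, contradicting faithfulness, because $a$ acts as zero on $B = M/J(R)M$ and the composite $\soc(R) \to \mathrm{Hom}(M,\soc(M))$ factoring is where the minimality, not just faithfulness, is used — specifically~\eqref{d.nondeg} is literally ``$M$ faithful $\Rightarrow$ every nonzero $a \in \soc(R)$ acts nontrivially on $M$,'' and acting nontrivially on $M$ is equivalent to acting nontrivially on $M/J(R)M$ into $\soc(M)$ via the standard fact that a nonzero submodule meets the socle and a proper submodule is contained in a maximal one. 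Conditions~\eqref{d.b},~\eqref{d.c} follow from minimality: a maximal submodule $B_0 \subseteq M/J(R)M$ pulls back to a proper submodule $M_0 \subsetneq M$, non-faithful by minimality, so some nonzero $a \in \mathrm{ann}(M_0)$; since $\mathrm{ann}(M_0)$ is a two-sided ideal it meets $\soc(R)$ nontrivially, and any such $a$ annihilates $B_0$; dually for~\eqref{d.c} using proper homomorphic images and that $\soc(M)$ is where the relevant simple submodules of $C$ live. Finite length of $B$ and $C$ follows from $R$ left Artinian and the minimality conditions, as remarked (and developed fully in Section~\ref{S.sub_or_hom}).

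Having verified all hypotheses, I would then identify the data so that~\eqref{d.P.main} becomes~\eqref{d.T.main}. The minimal central idempotents $f$ of $T = R/J(R)$ with $f\,C \neq 0$ correspond to the simple blocks occurring in $\soc(M)$; but I need the graph $G$ of Proposition~\ref{P.main}, whose left vertices are the $f$ with $f\,A = f\,\soc(R) \neq 0$ and right vertices the $e$ with $A\,e = \soc(R)\,e \neq 0$, with an edge when $f\,\soc(R)\,e \neq 0$ — and this is precisely the graph $G$ defined in the statement of Theorem~\ref{T.main}. I should check the parenthetical equivalences: $f\,C \neq 0 \iff f\,A \neq 0$ and $e\,B \neq 0 \iff A\,e \neq 0$ hold by~\eqref{d.nondeg},~\eqref{d.b},~\eqref{d.c} as noted at the start of the proof of Proposition~\ref{P.main} — so the graph built from $B, C$ coincides with the graph built from $A = \soc(R)$, which is the graph in the theorem statement. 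Finally, with $S = T = R/J(R)$, the length $\lt({_S}B)$ is $\lt(M/J(R)M)$ as a left $\!R\!$-module, $\lt({_T}C) = \lt(\soc(M))$, and $\lt({_T}A_S) = \lt(\soc(R))$ as an $\!(R,R)\!$-bimodule, so~\eqref{d.P.main} reads exactly as~\eqref{d.T.main}. The condition~\eqref{d.cardD} is satisfied vacuously: since $k$ is infinite and $R/J(R)$ is a product of matrix rings over $k$, every division ring $eSe$ and $f\,Tf$ is (a matrix ring over $k$, whose corner division ring is) infinite, so $N_S = N_T = \infty$ and the inequalities hold. The main obstacle, and the only place demanding genuine care rather than bookkeeping, is the verification of~\eqref{d.nondeg} together with the two minimality translations~\eqref{d.b},~\eqref{d.c} — one must be careful that ``nonzero element of $\soc(R)$ acts nontrivially on $M$'' really is equivalent, via the two-sided-ideal structure of annihilators and the socle/radical duality, to the corresponding statement about the bilinear map $\soc(R) \times M/J(R)M \to \soc(M)$ — but this is exactly the argument already carried out in the proof of Theorem~\ref{T.TG}, now applied without the centrality-of-socle simplification, so it goes through essentially verbatim.
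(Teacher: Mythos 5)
Your proposal matches the paper's own (sketched) proof: specialize Proposition~\ref{P.main} to the induced map $\soc(R)\times M/J(R)M\to\soc(M)$ with $S=T=R/J(R),$ verify~\eqref{d.nondeg},~\eqref{d.b},~\eqref{d.c} exactly as in Theorem~\ref{T.TG}, obtain~\eqref{d.small_either} from~\eqref{d.small_mxs} via Lemma~\ref{L.conditions}, and note that $k$ infinite makes~\eqref{d.cardD} automatic. The only blemish is your meandering verification of~\eqref{d.nondeg}; the clean statement is that $a\,J(R)M=0$ because $\soc(R)J(R)=0,$ so the action of $a$ on $M$ factors through $M/J(R)M$ with image in $\soc(M),$ and faithfulness then gives~\eqref{d.nondeg} immediately.
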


\begin{proof}[Sketch of proof]
Under the action of $R$ on $M,$ elements of
$\soc(R)$ annihilate $J(R)M$ and have
image in $\soc(M);$ so that action induces a balanced bilinear map
of $\!R\!$-modules and bimodules, $\soc(R)\times M/J(R)M\to \soc(M).$
Since all four $\!R\!$-module structures involved (the
three left module structures, and the right module
structure of $\soc(R))$ are annihilated by $J(R),$
that induced operation is a map of the form~\eqref{d.setup},
with $R/J(R)$ in the roles of both $S$ and $T.$
Since $M$ is faithful, this map satisfies~\eqref{d.nondeg}, while
as in the proof of Theorem~\ref{T.TG},
the minimality assumptions on $M$
give~\eqref{d.b} and~\eqref{d.c}.
Since~\eqref{d.small_mxs} implies~\eqref{d.small_either},
we can apply the preceding proposition, getting~\eqref{d.T.main}.
\end{proof}

\section{Examples, remarks, and questions}\label{S.misc}
\subsection{Counterexamples over small fields}\label{SS.Z2}
To show that Proposition~\ref{P.main} can fail if
condition~\eqref{d.cardD} (the requirement that our division rings
not be too small) is dropped, let $k$ be the field of
$2$ elements, let $A=B=k\times k\times k$
and $C=k\times k$ (as abelian groups for the moment),
and define $h:A\times B\to C$ by
$h((\alpha_1,\alpha_2,\alpha_3),(\beta_1,\beta_2,\beta_3))
=\alpha_1\beta_1(1,0)+ \alpha_2\beta_2(0,1)+ \alpha_3\beta_3(1,1).$
Letting $S=k\times k\times k$ and $T=k,$ and defining
the module structures of $_T A_S,$ $_S B$ and $_T C$ in the
obvious ways, we see that $h$ is indeed a balanced bilinear map,
i.e., satisfies~\eqref{d.setup}.

This map also clearly satisfies~\eqref{d.small_mxs}
and~\eqref{d.nondeg}, and it is not hard to
verify~\eqref{d.b} and~\eqref{d.c} as well.
However,~\eqref{d.cardD} fails, since $N_T=2,$ $d_T=3,$ $l_S=1.$
And in fact, the conclusion~\eqref{d.P.main} fails: the left-hand side
of that inequality is $3+2,$ while the right-hand side is $3+1.$
If we examine the steps of our proof of Proposition~\ref{P.main}
in this case, we see that conditions~\eqref{d.b} and~\eqref{d.c} make
$A,$ regarded as a family of maps $B\to C,$ left $\!2\!$-strong, but
of its three components $A\,e_i: e_i\,B\to C,$
none is $\!1\!$-strong.
We can, as in the proof of that proposition, snip two leaves off
$G$ without changing the numerical relationship between
the two sides of~\eqref{d.P.main}; but the remaining system, say
$A\,e_1: e_1\,B\to C,$ does not satisfy ~\eqref{d.P.main}.

More generally, over any finite field $k,$ say of $q$ elements,
one can get a similar example by taking $S=A=B=k^{q+1},$ $T=k,$ and
$C=k^2,$ and letting the $q+1$ components $A\,e_i: B\to C$ have
for images the $q+1$ one-dimensional subspaces of $C.$
(The reader who has worked through the above
$q=2$ case should not find it hard to supply the details
for this generalization.)
Still more generally, if we take $C$ to be $\!d\!$-dimensional
$(d\geq 2),$ we can make $S=A=B=k^{(q^d-1)/(q-1)},$ and let the
natural basis of $A$ act by maps having for images the
$(q^d-1)/(q-1)$ one-dimensional subspaces of $C.$

We can adapt these constructions to get examples showing that
for small $k$ Theorem~\ref{T.main} likewise fails;
but since in that case $S$ and $T$ must be the same, a bit
of adjustment is needed.
We can keep $S$ as in those constructions, but let $T=S,$
giving it actions on $C$ and $A$ under which all but one of its
minimal idempotents annihilate those objects.

Let me describe in concrete terms an $R$ and $M$ based, in this way,
on our initial example where $k$ is the field of $2$ elements.
Our $R$ will be the ring of all $4\times 4$
matrices over this $k$ with support in the union of the first row
and the main diagonal, and whose $(1,1)$ and $(2,2)$ entries are equal.
To obtain $M,$ we start with the left $\!R\!$-submodule of
$\r{Matr}_{4,3}(k)$ spanned as a $\!k\!$-vector-space by
$\{e_{11},\,e_{12},\,e_{13},\,e_{21},\,e_{32},\,e_{43}\},$
and divide out by the subspace spanned by $e_{11}+e_{12}+e_{13}.$
We find that $J(R)M=\soc(M)=$ the $\!2\!$-dimensional space
spanned by $e_{11}$ and $e_{12};$
that $M$ is a faithful $\!R\!$-module such that no proper
submodule or homomorphic image of $M$ is faithful, but that
$M$ does not satisfy~\eqref{d.T.main}, which for this case
would say $3+2\leq 3+1.$

\subsection{An example not satisfying~\eqref{d.small_either}}\label{SS.small}
I will give here an example of a system~\eqref{d.setup}
arising as the map $\soc(R)\times M/J(M)\to\soc(M)$ for a
faithful module $M$ over an Artinian local ring $R,$ which
does not satisfy~\eqref{d.small_either}, or our
conclusion~\eqref{d.P.main}.
In fact, it does not satisfy~\eqref{d.b} or~\eqref{d.c}
either; but it may give some insight into how things can
differ from the situation analyzed in the preceding sections.

Let $K$  be the field $\Q(2^{1/3}),$ let $F = K(\omega),$ where
$\omega$ is a primitive cube root of unity, and let $\sigma$ be the
automorphism of $F$ of order 3  which fixes $\omega,$ and
takes $2^{1/3}$ to $\omega\,2^{1/3}.$
Let $\r{tr}=\r{tr}_{F/K}: F\to K$ be the trace operation.
(Thus, $\r{tr}$ is $\!K\!$-linear, but $\sigma$ is not.)

Let $M$ be the $\!\Q\!$-vector-space $F^2,$ and let us define the
right shift operation $M\to M,$
\begin{equation}\begin{minipage}[c]{35pc}\label{d.s}
$s: (a,b)\mapsto (0,a).$
\end{minipage}\end{equation}
We shall understand $\r{tr},$ $\sigma$ and each element of $F$
to act on $M$ componentwise.
(So if $a\in F,$ the symbol $a$ will also represent
the operation of componentwise multiplication of elements of $M$
by $a,$ which does not in general commute with either $\r{tr}$
or $\sigma.)$

We now define two operations on $M,$
\begin{equation}\begin{minipage}[c]{35pc}\label{d.x,y}
$x = \r{tr}\ \sigma\ s:\ (a,b)\mapsto (0,\r{tr}(\sigma(a))),$\\
\hspace*{.03em}%
$y = \sigma\ \r{tr}\ s:\ (a,b)\mapsto (0,\sigma(\r{tr}(a))).$
\end{minipage}\end{equation}

From the fact that $[F:K]=2,$ it is easy to see
that $K\,\sigma(K)=F=K\,\sigma^{-1}(K).$
Combining this with the fact that $\r{tr}$ and $s$ commute with the
action of elements of $K,$ we see that
\begin{equation}\begin{minipage}[c]{35pc}\label{d.xF,Fy}
$K\,x\,K = K\,\r{tr}\,\sigma\,s\,K = \r{tr}\,K\,\sigma\,s\,K =
\r{tr}\ \sigma\ (\sigma^{-1}(K))\,s\,K =
\r{tr}\ \sigma\ s\ (\sigma^{-1}(K)\,K) = x\,F,$\\
\hspace*{.03em}%
$K\,y\,K = K\,\sigma\,\r{tr}\,s\,K = K\,\sigma\,\r{tr}\,K\,s =
K\,\sigma\,K\,\r{tr}\,s = (K\,\sigma(K))\,\sigma\,\r{tr}\,s = F\,y.$
\end{minipage}\end{equation}
These calculation show in particular that the bimodule
operations of the $\!(K,K)\!$-bimodules
spanned by each of $x$ and $y$ contain the operations of a
$\!1\!$-dimensional $\!F\!$-module; so each of these
bimodules is simple.

Now let $R$ be the ring of $\!\Q\!$-vector-space endomorphisms
of $M$ generated by the actions of the elements of $K,$
together with the two endomorphisms $x$ and $y.$
We see that
\begin{equation}\begin{minipage}[c]{35pc}\label{d.R}
$R=K+K\,x\,K+K\,y\,K=K+x\,F+F\,y,$\quad and \quad
$J(R)=\soc(R) = x\,F + F\,y.$
\end{minipage}\end{equation}

The subideals of $\soc(R)$ generated by $x$ and by $y$ are
isomorphic to one another as bimodules over $R/J(R)\cong K,$ namely,
each is isomorphic to the $\!(K,K)\!$-bimodule $F\,\sigma=\sigma\,F.$
But as systems of maps $M/J(R)M\to \soc(M)$ they behave differently:
$xF$ has range in $(0,K\,\r{tr}(F))=(0,K)\neq (0,F),$
but has trivial kernel; $Fy,$ dually, has all of $(0,F)$ as
range, but, identifying $M/J(R)M$ with $F,$ its kernel
is the nontrivial $\!K\!$-subspace $\r{ker}(\r{tr}).$

\subsection{An approach that probably doesn't go anywhere}\label{SS.extend_scalars}
Theorem~\ref{T.main} does not cover the case where $R$ is a general
finite-dimensional algebra $R$ over an infinite field $k,$
since in that situation, $R/J(R)$ need not be a direct product of
matrix rings.
But it is natural to ask: Can't we start with such an
$R$ and an $\!R\!$-module $M,$
extend scalars to the algebraic closure of $k,$
and apply Theorem~\ref{T.main} to the resulting structures?

The trouble is that the relevant properties of $R$ and $M$
may not carry over under this change of base field.
I do not know whether we can expect the minimality conditions
on $M$ to carry over; but the set of minimal central idempotents
of $R$ can certainly grow under such an extension.
So I do not see how anything can be achieved in this way.

\subsection{Some ways our results {\em can} be strengthened}\label{SS.sum_lengths}
One step in our proof of Proposition~\ref{P.main} was noticeably
wasteful.
When we dropped all edges that were neither
left nor right marked, we counted each of them as contributing
``at least $1\!$'' to $\lt({_T}A_S).$
But depending on what we know
about the structure of $A,$ we may be able to raise this estimate.
A difficulty is that knowing the structure of $A$ doesn't
tell us {\em which} edges will be dropped, nor even exactly how many:
the answers may depend on $B$ and $C.$

However, we do know that dropping all unmarked edges must result in
a graph in which there are at least as many vertices as edges
(possibly more, if some edges are marked for both their vertices);
so if the graph $G$ determined by $A$ has more edges than
vertices, i.e., has $\chi(G)<0,$ then
at least $-\chi(G)$ edges $(f,e)$ must be dropped.
Thus, if we have in front of us a list of the lengths of all
the nonzero bimodules $fA\,e$ (with repetitions
shown), then we can say that when
we delete $-\chi(G)$ such bimodules, the sum of their
lengths must be at least the sum of the $-\chi(G)$ smallest
elements on that list.
Deleting $-\chi(G)$ edges will leave us a system whose graph has
Euler characteristic~$0,$ to which we can apply
Proposition~\ref{P.main} as it stands.
Consequently, we have

\begin{corollary}\label{C.better}
In the context of Proposition~\ref{P.main}, let
the family \textup{(}set with multiplicity\textup{)}
of lengths of nonzero subbimodules $fA\,e$ of $A,$
listed in ascending order, be $d_1\leq d_2\leq\dots\,.$
Then if $\chi(G)<0,$
the term $+\chi(G)$ in~\eqref{d.P.main} can be improved
\textup{(}decreased\textup{)} to
\begin{equation}\begin{minipage}[c]{35pc}\label{d.better}
$-d_1-d_2-\dots -d_{-\chi(G)}.$
\end{minipage}\end{equation}
The same applies, mutatis mutandis, to
the inequality~\eqref{d.T.main} of Theorem~\ref{T.main}.\qed
\end{corollary}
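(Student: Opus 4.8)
The plan is to sharpen the very first reduction in the proof of Proposition~\ref{P.main}: instead of deleting \emph{all} edges of $G$ that are neither left- nor right-marked, we delete just enough of them to raise the Euler characteristic to $0$, and then quote Proposition~\ref{P.main} as it stands. Write $\rho=-\chi(G)>0$ and fix the left- and right-markings of the edges of $G$ chosen in that proof. The preliminary observation is that $G$ has at least $\rho$ unmarked edges. Indeed, the left-marked edges are pairwise distinct, being incident to distinct left vertices, and likewise the right-marked edges; so the number of marked edges is at most (number of left vertices) $+$ (number of right vertices) $=V$, the total number of vertices of $G$. Hence the number of unmarked edges is at least $E-V=-\chi(G)=\rho$, where $E$ is the number of edges of $G$.

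Choose a set $D$ of exactly $\rho$ unmarked edges, put $A'=\bigoplus_{(f,e)\notin D}fA\,e\subseteq A$, and let $G'$ be the bipartite graph associated to the system ${_T}A'_S\times{_S}B\to{_T}C$ as in Proposition~\ref{P.main}. Since no marked edge lies in $D$, each vertex still carries its marked edge, so $G'$ has the same vertex-set as $G$ and edge-set $E(G)\setminus D$; thus $\chi(G')=\chi(G)+\rho=0$. I would then check that this primed system again meets the hypotheses of Proposition~\ref{P.main}: conditions~\eqref{d.setup},~\eqref{d.nondeg}, and~\eqref{d.small_either} are inherited by the sub-bimodule $A'$ at once (for~\eqref{d.small_either} note that $fA'e=fA\,e$ whenever the former is nonzero); the constants $l_S,l_T,N_S,N_T$ are unchanged while $d_S,d_T$ can only shrink, so~\eqref{d.cardD} persists; and every left- or right-marked edge survives in $G'$ with $fA'e=fA\,e$, so~\eqref{d.strong} holds verbatim for the primed system. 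Because the proof of Proposition~\ref{P.main} uses~\eqref{d.b} and~\eqref{d.c} only to derive~\eqref{d.strong} via Corollary~\ref{C.N-strong} and~\eqref{d.cardD}, and nothing sharper thereafter, that is exactly what is needed; so Proposition~\ref{P.main} applied to the primed system yields $\lt({_S}B)+\lt({_T}C)\le\lt({_T}A'_S)+\chi(G')=\lt({_T}A'_S)$.

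To conclude, decompose $A=A'\oplus\bigoplus_{(f,e)\in D}fA\,e$ as $\!(T,S)\!$-bimodules, so $\lt({_T}A'_S)=\lt({_T}A_S)-\sum_{(f,e)\in D}\lt({_T}(fA\,e)_S)$. The $\rho$ lengths $\lt({_T}(fA\,e)_S)$ with $(f,e)\in D$ form a sub-multiset of size $\rho$ of the ascending list $d_1\le d_2\le\cdots$, whose $i$-th smallest member is $\ge d_i$; hence that sum is $\ge d_1+\cdots+d_\rho$, and we obtain $\lt({_S}B)+\lt({_T}C)\le\lt({_T}A_S)-(d_1+\cdots+d_\rho)$, the asserted improvement of~\eqref{d.P.main}. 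The statement for Theorem~\ref{T.main} follows by applying this sharpened form of Proposition~\ref{P.main} to the bilinear map $\soc(R)\times M/J(R)M\to\soc(M)$, whose associated graph is exactly the $G$ of that theorem.

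The step I expect to need the most care is the transfer of hypotheses to the primed system: conditions~\eqref{d.b} and~\eqref{d.c} in their original global form need not survive deletion of the $\rho$ summands $fA\,e$, since we may have discarded the only element acting trivially on some maximal submodule of $B$, or with image in some simple submodule of $C$. What rescues the argument is that the proof of Proposition~\ref{P.main} promotes~\eqref{d.b} and~\eqref{d.c} to~\eqref{d.strong} at its outset and then appeals only to~\eqref{d.strong},~\eqref{d.small_either}, and the central-idempotent decomposition, all inherited by the primed system because no marked edge was removed. A fully self-contained variant, avoiding the primed system entirely, is to redo the accounting in the proof of Proposition~\ref{P.main}: credit each unmarked-edge deletion with its full contribution $\lt({_T}(fA\,e)_S)$ to the decrease of $\lt({_T}A_S)$, rather than the lower bound $1$ used there; since at least $\rho$ such deletions occur, the $i$-th smallest of their contributions is $\ge d_i$ and each $d_i\ge1$, so the same inequality $\lt({_S}B)+\lt({_T}C)\le\lt({_T}A_S)-(d_1+\cdots+d_\rho)$ drops out of the bookkeeping already present there.
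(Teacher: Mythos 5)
Your proposal is correct and follows essentially the same route as the paper's own justification (given in the discussion preceding the corollary): delete $-\chi(G)$ unmarked edges to bring the Euler characteristic to $0$, apply Proposition~\ref{P.main} to what remains, and bound the total length of the deleted bimodules from below by $d_1+\dots+d_{-\chi(G)}$. You are in fact somewhat more careful than the paper about why the truncated system still satisfies what is needed — noting that \eqref{d.b} and \eqref{d.c} may fail for $A'$ but that the proof of Proposition~\ref{P.main} uses them only to establish \eqref{d.strong}, which survives because no marked edge is removed.
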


One can do still a bit better, using the fact that we don't
delete as unmarked {\em all} the edges adjacent to any vertex.
Consequently, if the lengths of the bimodules
corresponding to edges adjacent to
a certain vertex all lie among the first $-\chi(G)$ terms of the
above sequence $d_1\leq d_2\leq\dots\,,$ then
in~\eqref{d.better} we can skip the largest of these lengths,
and instead throw in $d_{-\chi(G)+1},$ which may be larger, at the end
of the list; and iterate this process for other vertices.

One can also strengthen Theorem~\ref{T.main} by weakening the
assumption that $R$ contains $k,$ to say that
$k$ is the residue field of a local ring contained in $R,$
which again becomes central in $R/J(R)$ and centralizes $\soc(R).$

I have not put these observations into my formal statement
of Theorem~\ref{T.main}, because I feel that the more urgent
task is to see whether one can generalize Proposition~\ref{P.main}
to avoid or weaken the awkward restriction~\eqref{d.small_either};
and that if one can, some of these generalizations might be
embraced by broader, more easily stated results.

\subsection{Sketch of a more elaborate version of the $\!N\!$-strong condition}\label{SS.better_N-str}
The reader may have noticed that in the proof of
Proposition~\ref{P.main}, the final case, where $G$ has
just one edge, both right and left marked,
is roughly the situation of Proposition~\ref{P.cm_bilin}
(as reformulated in Corollary~\ref{C.cm}), but the proof
is different from that of the earlier proposition.
The reason is that the information we have available at that
point in the proof of Proposition~\ref{P.main}, that $A$ is
left $\!l_S\!$-strong and right $\!l_T\!$-strong,
while useful in finding elements of $A$ whose images
lie in simple submodules of $C$ {\em outside} of given
submodules, and elements whose kernels have the dual property,
does not provide a way of finding such elements whose images lie
together {\em in} some proper submodule (and dually for kernels).
But that was what we needed in Case~1 of the proof
of Proposition~\ref{P.cm_bilin}, when we
chose $b_1,\dots,b_{m-1}\in \r{ker}(f).$

However, I think the concept of an $\!N\!$-strong map can be modified
to make it compatible with
the method of proof of Proposition~\ref{P.cm_bilin}.
Let me sketch how.

Given $W,$ $X$ and $Y$ as in Definition~\ref{D.lt_strong},
and adding the assumption that $Y$ has finite length,
our modified definition will involve a concept of $W$ being left
$\!N\!$-strong {\em relative to} a nonzero submodule $Y'\subseteq Y.$
That condition will be defined by recursion on the length of $Y'.$
If $Y'$ has length $1,$ then $W$ will be called left $\!N\!$-strong
relative to $Y'$ if and only if $W$ has a nonzero element
whose image is contained in $Y'.$
If the concept has been defined
relative to submodules of some length $r\geq 1,$ then $W$ will be
said to be left $\!N\!$-strong relative to a submodule $Y'$
of length $r+1$ if and only if for every submodule $Y''\subseteq Y'$
of length $r-1,$ there exist $>N$ submodules of
length $r$ between $Y''$ and $Y'$ relative to which $W$ is
left $\!N\!$-strong.
We will simply say $W$ is left $\!N\!$-strong (in our new sense) if
it is left $\!N\!$-strong relative to its codomain $Y.$

It is now easy to verify that
if, for every simple submodule of $Y_0\subseteq Y,$ there is a
nonzero element of $W$ with image in $Y_0,$ and if the division ring
over which $V$ is a full matrix ring has cardinality $\geq N,$
then $W$ is left $\!N\!$-strong under our new definition.
Moreover, an easy induction shows that if
a union $W=W_1\cup\dots\cup W_d$ is left $\!N\!$-strong, then
at least one of the $W_i$ is left $\!N/d\!$-strong.

The definition of right $\!N\!$-strong would be modified analogously.

In the proof of Proposition~\ref{P.cm_bilin},
the hypotheses~\eqref{d.cm_b} and~\eqref{d.cm_c} could then be
weakened to say that (up to the change of notation appropriate
to a subspace $A\subseteq B\otimes_k C$ rather than a map
$A\times B\to C),$ $A$ is both left and right $\!1\!$-strong in our
modified sense.
I suspect that the same method could be adapted to the last
part of the proof of Proposition~\ref{P.main}, and would in fact
allow us to weaken the hypothesis~\eqref{d.cardD} by dropping
the factors $l_S$ and~$l_T.$

I haven't worked out the details, because they do not
get at the serious restrictions in our results.
In particular, the suggested change in the end of the proof
of Proposition~\ref{P.main}
would not get rid of the need for condition~\eqref{d.small_either}.

Let us now look at what we wish we could do.

\subsection{Some questions}\label{SS.question}
Here is an innocent-sounding generalization of our first main
result that we might ask for.

\begin{question}\label{Q.socle_central}
Does Theorem~\ref{T.TG} remain true if the assumption that $\soc(R)$
is central in $R$ is deleted?
\end{question}

To prove such a result, we would want a version of
Proposition~\ref{P.cm_bilin} involving a division ring
rather than a field.
As in the development of our results on non-local
rings, I don't see a convenient way of ``symmetrizing''
the general statement we need, so in the next question, I
will ask, not for the analog of that proposition, but of its corollary.
Moreover, although the result needed just to get a positive
answer to Question~\ref{Q.socle_central}
would have for $B$ and $C$ vector spaces over the
same division ring, I expect it would be no more difficult
to prove such a result without that restriction; so let us
pose the question as follows.

\begin{question}\label{Q.loc_bilin}
Let $S$ and $T$ be division rings,
${_S}B$ and ${_T}C$ be nonzero finite-dimensional vector spaces,
and ${_T}A_S$ be a subbimodule of the
$\!(T,S)\!$-bimodule of all additive group homomorphisms
${_S}B\to {_T}C.$
Suppose moreover that
\begin{equation}\begin{minipage}[c]{35pc}\label{d.loc_b}
For every proper subspace $B'\subsetneq B,$ there is at least one
$a\in A$ whose restriction to $B'$ is zero.
\end{minipage}\end{equation}
and
\begin{equation}\begin{minipage}[c]{35pc}\label{d.loc_c}
For every proper homomorphic image $C/C_0$ of $C,$ there is at least
one $a\in A$ whose composite with the factor map $C\to C/C_0$ is zero.
\end{minipage}\end{equation}
Then must it be true that
\begin{equation}\begin{minipage}[c]{35pc}\label{d.loc_lt}
$\lt({_T}A_S)\ \geq\ \dim_S(B)\ +\ \dim_T(C)\ -\ 1$
\end{minipage}\end{equation}
\textup{(}where $\lt({_T}A_S)$ denotes the length
of $A$ as a bimodule\textup{)}?
\end{question}

I have posed this question in a relatively easy-to-state form,
but my hope is that if a positive answer can be proved, then one
will be able push the proof further, and get the same result with the
hypotheses~\eqref{d.loc_b} and~\eqref{d.loc_c} weakened
to say that $A$ is left and right $\!N\!$-strong for
appropriate $N,$ in the modified sense
sketched in \S\ref{SS.better_N-str} above (or something
like it), and that this could be used to prove generalizations
of Proposition~\ref{P.main} and Theorem~\ref{T.main}.

Incidentally, it would be harmless to throw into
our hoped-for variant of
Proposition~\ref{P.main} the added assumption that $A$
is semisimple as a left module, a right module, and a bimodule,
since those conditions are true of the socle of
a left Artin ring, the situation to which we apply that
result in Theorem~\ref{T.main}.

\subsection{A waffle about wording}\label{SS.ubiquitous}
I would have preferred a more suggestive
term for what I have called left and right $\!N\!$-strong
families of maps.
I was tempted to replace ``strong'' with ``ubiquitous'';
but this might suggest too much -- a property like~\eqref{d.b}
and~\eqref{d.c}, rather than the weaker property actually defined.
Another thought was ``prevalent''; but this seemed a bit vague.

\section{Getting minimal faithful modules from module decompositions}\label{S.sub_or_hom}

This section is essentially independent of the rest of this note,
though the final assertion proved will complement
the results proved above.
Namely, Proposition~\ref{P.socle}(iii) below, though it has a
weaker conclusion than Theorems~\ref{T.TG} and~\ref{T.main},
is applicable when the hypotheses
of those theorems are not satisfied.
Aside from that final result, the focus will be on modules
with only one of the two minimality properties considered in
preceding sections.
(The one other exception to independence from the rest of this
note is that at one point below,
we will refer to an example from the preceding section.)

The two preliminary lemmas below
may be of interest in their own right.
The final statement of each describes how, under certain
conditions, a module can be decomposed into ``small pieces'':
in the first, as a sum of submodules $N$ such
that $N/J(R)N$ is simple; in the second, as a subdirect
product of modules $N$ with $\soc(N)$ simple.

\begin{lemma}\label{L.M_as_sum}
Let $R$ be a left Artinian ring, and $M$ a left $\!R\!$-module
\textup{(}not necessarily Artinian\textup{)}.
Then

\textup{(i)} If $L$ is a simple submodule of $M/J(R)M,$
then $M$ has a submodule $N$ such that the inclusion $N\subseteq M$
induces an isomorphism $N/J(R)N\cong L\subseteq M/J(R)M.$

Hence

\textup{(ii)} Given a decomposition of $M/J(R)M$
as a sum of simple modules $\sum_{i\in I} L_i,$ one
can write $M$ as the sum of a family of submodules
$N_i$ $(i\in I),$ such that for each $i,$ $N_i/J(R)N_i\cong L_i,$
and $L_i$ is the image of $N_i$ in $M/J(R)M.$
\end{lemma}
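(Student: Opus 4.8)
The plan is to prove (i) first, and then obtain (ii) as a routine corollary by applying (i) to each simple summand of $M/J(R)M$.

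For (i), let $\pi:M\to M/J(R)M$ be the quotient map, and let $L\subseteq M/J(R)M$ be a simple submodule. Consider the set $\mathcal S$ of submodules $N\subseteq M$ with $\pi(N)=L$; this set is nonempty since $\pi^{-1}(L)\in\mathcal S$. I would like to pick $N\in\mathcal S$ minimal, but $M$ need not be Artinian, so minimality is not automatic; instead I would use Zorn's lemma on $\mathcal S$ ordered by reverse inclusion, checking that the intersection of a chain $\{N_\alpha\}$ in $\mathcal S$ still maps onto $L$. This last point is where left Artinian-ness of $R$ enters: $L$ is a finitely generated (indeed simple) $R/J(R)$-module, so lift finitely many generators of $L$ to elements $m_1,\dots,m_r\in\bigcap_\alpha N_\alpha$ — wait, that is exactly what fails for an arbitrary chain. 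So instead I would pass to a finitely generated submodule at the outset: choose $m\in\pi^{-1}(L)$ with $\pi(m)$ generating $L$ (possible since $L$ is simple), and replace $M$ by the submodule $Rm$, which is finitely generated over the left Artinian ring $R$, hence Artinian. Then $\mathcal S'=\{N\subseteq Rm:\pi(N)=L\}$ is a nonempty set of submodules of an Artinian module, so it has a minimal element $N$.

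The crux is then to show this minimal $N$ satisfies $J(R)N=N\cap J(R)M$, equivalently that the induced map $N/J(R)N\to M/J(R)M$ has image exactly $L$ and is injective. Surjectivity onto $L$ is immediate since $\pi(N)=L$. For injectivity: the kernel of $N/J(R)N\to M/J(R)M$ is $(N\cap J(R)M)/J(R)N$. Suppose $N\cap J(R)M\supsetneq J(R)N$. I would argue toward contradicting minimality of $N$. Note $\pi(N)=L$ is simple, so $N+J(R)M=\pi^{-1}(L)$ and $N/(N\cap J(R)M)\cong L$; thus $N\cap J(R)M$ is a maximal submodule of $N$, and $N/(N\cap J(R)M)$ is simple. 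If $N\cap J(R)M>J(R)N$, then picking any maximal submodule $N'$ of $N$ with $N'\supseteq J(R)N$ and $N'\neq N\cap J(R)M$ — such $N'$ exists because $N/J(R)N$ is semisimple of length $\geq 2$ — one checks $N'+J(R)M=M\cap\pi^{-1}(L)$ still, i.e. $\pi(N')=L$ since $N'$ is not contained in $N\cap J(R)M=\pi^{-1}(0)\cap N$... this needs care. The clean way: $J(R)N\subseteq N\cap J(R)M$ always (as $J(R)N\subseteq J(R)M$), so if they differ, $N\cap J(R)M$ is a submodule strictly between $J(R)N$ and $N$; by Nakayama $N/J(R)N\neq 0$ is semisimple, and I want a submodule $N'\subsetneq N$ with $\pi(N')=L$. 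Take $N'$ a complement in $N$ to a simple summand of $(N\cap J(R)M)/J(R)N$ inside $N/J(R)N$, pulled back to $N$; then $N'\subsetneq N$, but $N'+(N\cap J(R)M)=N$, hence $N'$ surjects onto $N/(N\cap J(R)M)\cong L$, and since $\pi$ factors through this quotient (as $N\cap J(R)M\subseteq\ker\pi|_N$), we get $\pi(N')=L$, contradicting minimality. This forces $N\cap J(R)M=J(R)N$, giving the isomorphism $N/J(R)N\cong L$.

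The main obstacle I anticipate is precisely this last injectivity step — specifically, producing the strictly smaller submodule $N'$ still mapping onto $L$, which requires exploiting that $J(R)N\subseteq\ker(\pi|_N)$ so that $\pi|_N$ descends to the semisimple module $N/J(R)N$, and then doing linear algebra (splitting off a summand) there. Once (i) is in hand, (ii) is immediate: given $M/J(R)M=\sum_{i\in I}L_i$, apply (i) to each $L_i$ to get $N_i\subseteq M$ with $N_i/J(R)N_i\cong L_i$ mapping isomorphically onto $L_i\subseteq M/J(R)M$; then $\sum_i N_i$ maps onto $\sum_i L_i=M/J(R)M$, so $\sum_i N_i+J(R)M=M$, and by Nakayama's lemma (valid since $M/\sum_i N_i$ is a module over the left Artinian ring $R$ annihilated mod its radical, or by a direct argument reducing to finitely generated submodules) we conclude $\sum_i N_i=M$, with each $N_i$ and its image behaving as asserted.
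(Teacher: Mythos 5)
Your proposal is correct and follows essentially the same route as the paper: pass to the cyclic (hence Artinian) submodule $Rm,$ take $N$ minimal with image $L,$ and use semisimplicity of $N/J(R)N$ to split off a proper submodule still surjecting onto $L$ if injectivity failed, contradicting minimality. For the Nakayama step in (ii), the clean justification (which the paper outsources to a citation) is simply that $J(R)$ is nilpotent for $R$ left Artinian, so $M=\sum_i N_i+J(R)M$ iterates to $M=\sum_i N_i+J(R)^kM=\sum_i N_i.$
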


\begin{proof}
In the situation of~(i),
let $x$ be any element of $M$ whose image in $M/J(R)M$
is a nonzero member of $L.$
Thus, the image of $R\,x$ in $M/J(R)M$ is $L.$
Since $R$ is left Artinian, so is $R\,x,$ hence we can
find a submodule $N\subseteq R\,x$ minimal for having $L$ as its image
in $M/J(R)M.$
Now since $N/J(R)N$ is semisimple, it
has a submodule $L'$ which maps isomorphically to $L$ in $M/J(R)M.$
If $L'$ were a proper submodule of $N/J(R)N,$ then its inverse
image in $N$ would
be a proper submodule of $N$ which still mapped surjectively
to $L,$ contradicting the minimality of $N.$
Hence $N/J(R)N=L'\cong L,$ completing the proof of~(i).

In the situation of~(ii), choose for each $L_i$
a submodule $N_i\subseteq M$ as in~(i).
Then we see that $M=J(R)M+\sum_i N_i,$ hence since $R$ is Artinian,
$M=\sum_i N_i$ \cite[Theorem~23.16\,$(1)\!{\implies}\!(2')$]{TYL1},
as required.
\end{proof}

The next result is of a dual sort, but the arguments can be
carried out in a much more general context, so that the result we are
aiming for (the final sentence) looks like an afterthought.

\begin{lemma}\label{L.M_as_subdir}
Let $R$ be a ring and $M$ a left $\!R\!$-module.
Then

\textup{(i)} If $L$ is any submodule of $M,$ then $M$
has a homomorphic image $M/N$ such that the composite
map $L\hookrightarrow M\to M/N$ is an embedding, and
the embedded image of $L$ is essential in $M/N$ \textup{(}i.e.,
has nonzero intersection with every nonzero submodule of $M/N).$

Hence

\textup{(ii)} If $E$ is an essential submodule of $M,$
and $f: E\to\prod_I L_i$ a subdirect decomposition of $E,$
then there exists a subdirect decomposition $g:M\to\prod_i M_i$
of $M,$ such that each $M_i$ is an overmodule of $L_i$ in which $L_i$
is essential, and $f$ is the restriction of $g$ to $E\subseteq M.$

In particular, every locally Artinian module can be written
as a subdirect product of locally Artinian modules with simple socles.
\end{lemma}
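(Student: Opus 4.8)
The plan is to prove parts~(i) and~(ii) and then obtain the final sentence as a special case. For~(i), I would pick, via Zorn's lemma, a submodule $N\subseteq M$ maximal among those with $N\cap L=0$ (the condition is preserved under unions of chains, so such an $N$ exists). Then $L\hookrightarrow M\to M/N$ has kernel $L\cap N=0$, hence is an embedding; and its image is essential, because if $N'/N\neq 0$ in $M/N$ then $N'\supsetneq N$, so $N'\cap L\neq 0$ by maximality, and any nonzero $x\in N'\cap L$ lies outside $N$, giving a nonzero element $x+N$ of $(N'/N)\cap((L+N)/N)$.

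For~(ii), I would set $K_i=\r{ker}(\r{pr}_i\circ f)\subseteq E$ for each $i\in I$. Since $f$ is a subdirect decomposition, each $\r{pr}_i\circ f$ is onto and induces an isomorphism $E/K_i\cong L_i$, while injectivity of $f$ gives $\bigcap_i K_i=0$. Each $K_i$ is a submodule of $E$, hence of $M$, so applying~(i) to the module $M/K_i$ and its submodule $E/K_i$ yields a submodule $N_i\supseteq K_i$ of $M$ with $E/K_i$ essential in $(M/K_i)/(N_i/K_i)\cong M/N_i=:M_i$; in particular $E\cap N_i=K_i$. I would then take $g:M\to\prod_i M_i$, $x\mapsto(x+N_i)_i$. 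Each projection $M\to M_i$ is surjective, and $g$ is injective since $\r{ker}(g)=\bigcap_i N_i$ meets $E$ in $\bigcap_i(N_i\cap E)=\bigcap_i K_i=0$, whence $\bigcap_i N_i=0$ because $E$ is essential in $M$. Finally, for $x\in E$ the $i$-th coordinate of $g(x)$ is $x+N_i$, which under the identifications $L_i\cong E/K_i\hookrightarrow M_i$ equals $(\r{pr}_i\circ f)(x)$; so $g|_E=f$, and each $M_i$ contains $L_i$ as an essential submodule by construction.

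For the final sentence, let $M$ be locally Artinian. Then $\soc(M)$ is essential in $M$, since every nonzero $x\in M$ generates an Artinian submodule $Rx$, which therefore contains a simple submodule, so $\soc(M)$ meets every nonzero submodule. Writing $\soc(M)=\bigoplus_{i\in I}S_i$ with the $S_i$ simple, the inclusion $\soc(M)\hookrightarrow\prod_i S_i$ is a subdirect decomposition; applying~(ii) with $E=\soc(M)$ and $L_i=S_i$ produces $g:M\hookrightarrow\prod_i M_i$ with each $M_i=M/N_i$ containing $S_i$ as an essential submodule. Then $\soc(M_i)=S_i$ is simple (an essential simple submodule absorbs every simple submodule), and $M_i$, being a homomorphic image of $M$, is again locally Artinian, since the image in $M_i$ of any cyclic submodule of $M$ is Artinian. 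This is the asserted decomposition.

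I expect the only point needing care to be the bookkeeping in~(ii): observing that each $K_i$ is a submodule of $M$, not merely of $E$, so that~(i) applies to $M/K_i$; checking $E\cap N_i=K_i$; and verifying, under the identifications $L_i\cong E/K_i$, that the assembled map $g$ restricts to $f$ on $E$. The one place where the hypotheses really bite is the use of essentiality of $E$ in $M$ to pass from $(\bigcap_i N_i)\cap E=0$ to $\bigcap_i N_i=0$. None of this is deep, but it is where the content of the statement lies.
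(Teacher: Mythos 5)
Your proposal is correct and follows essentially the same route as the paper: Zorn's lemma to get $N$ maximal with $N\cap L=0$ for (i), the kernels $K_i=\ker(\r{pr}_i\circ f)$ with (i) applied to $M/K_i$ for (ii), injectivity of $g$ via essentiality of $E$, and the socle (essential and a direct sum of simples) for the final assertion. The extra bookkeeping you supply ($E\cap N_i=K_i$, the identification giving $g|_E=f$, and the checks that $\soc(M)$ is essential and that the $M_i$ remain locally Artinian) is all accurate.
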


\begin{proof}
In the situation of~(i), let $N$ be maximal among submodules
of $M$ having trivial intersection with $L.$
The triviality of this intersection means that $L$ embeds
in $M/N,$ while the maximality condition makes
the image of $L$ essential therein.
(If it were not essential, $M/N$ would have a
nonzero submodule $M'$ disjoint from the image of $L,$ and the
inverse image of $M'$ in $M$ would contradict the maximality of $N.)$

In the situation of~(ii), for each $j\in I$ let $K_j$ be the kernel of
the composite $E\to\prod_I L_i\to L_j.$
Applying statement~(i) with $M/K_j$ in the role of $M,$ and
$E/K_j\cong L_j$ in the role of $L,$ we get an
image $M_j$ of $M/K_j,$ and
hence of $M,$ in which $L_j$ is embedded and is essential.
Now since $E$ is essential in $M,$ every
nonzero submodule $M'\subseteq M$ has
nonzero intersection with $E,$ and that
intersection has nonzero projection to $L_i$ for some $i;$
so in particular, for that $i,$ $M'$ has nonzero image in $M_i.$
Since this is true for every $M',$
the map $M\to\prod_I M_i$ is one-to-one,
and gives the desired subdirect decomposition.

To get the final assertion, note that the socle
of a locally Artinian module is essential, and, being semisimple,
can be written as a subdirect product (indeed, as a direct sum)
of simple modules; so we can apply~(ii) with $E=\soc(M)$
and the $L_i$ simple.
Each of the $M_i$ in the resulting decomposition will
have a simple essential submodule $L_i,$ so that submodule
must be its socle.
\end{proof}

We can now get the following result, showing that given a
faithful module $M$ over an Artinian ring, we can carve
out of $M$ a ``small'' faithful submodule,
factor-module, or subfactor.
Note that in the statement, the length of $\soc(R)$ as a bimodule
may be less than its length as a left or right module.
(For example, the full $n\times n$ matrix
ring over a division ring is its own socle, and has length
$n$ as a right and as a left module, but length $1$ as a bimodule.
Similarly, in the $R$ of \S\ref{SS.small},
each of the direct summands $x\,F$ and $F\,y$ of $\soc(R)$
has length $2$ as left and as right $\!R\!$-module, but length $1$
as an $\!R\!$-bimodule; so $\soc(R)$ has length $4$
on each side, but $2$ as a bimodule.)

\begin{proposition}\label{P.socle}
Let $R$ be a left Artinian ring,
let $n$ be the length of $\soc(R)$ as a bimodule
\textup{(}equivalently, as a $\!2\!$-sided ideal\textup{)},
and let $M$ be a faithful $\!R\!$-module.
Then

\textup{(i)} $M$ has a submodule $M'$ which is again faithful over $R,$
and satisfies $\lt(M'/J(R)\,M')\leq n.$
\textup{(}In particular, $M'$ is generated by $\leq n$
elements.\textup{)}

\textup{(ii)} $M$ has a homomorphic image $M''$ which is faithful
over $R,$ and satisfies $\lt(\soc(M''))\leq\nolinebreak n.$

\textup{(iii)} $M$ has a subfactor faithful
over $R$ satisfying both these inequalities.
\end{proposition}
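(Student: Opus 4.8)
The plan is to prove (i) and (ii) by extracting, from a faithful module $M$, a ``small'' faithful piece using the fact that $\soc(R)$, as a bimodule of length $n$, decomposes into $n$ simple subfactors, and that faithfulness of $M$ over $R$ is equivalent to faithfulness of the action of $\soc(R)$ on $M$ (since any nonzero ideal of $R$ meets $\soc(R)$, the annihilator of $M$ is zero iff it meets $\soc(R)$ trivially, i.e.\ iff $\soc(R)$ acts faithfully on $M$). So it suffices, in each part, to produce the desired submodule or factor module on which $\soc(R)$ still acts faithfully.

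For (i), I would first apply Lemma~\ref{L.M_as_sum}(ii): writing $M/J(R)M=\sum_{i\in I} L_i$ as a sum of simple modules, we get submodules $N_i\subseteq M$ with $N_i/J(R)N_i\cong L_i$ and $M=\sum_i N_i$. Since $\soc(R)$ acts faithfully on $M=\sum_i N_i$, for each nonzero element of $\soc(R)$ (indeed for each of the $n$ simple subfactors in a composition series of $\soc(R)$ as a bimodule) some $N_i$ witnesses nontriviality; choosing a finite subset $i_1,\dots,i_r$ of $I$ large enough to witness faithfulness on all $n$ composition factors of $\soc(R)$, and taking $M'=N_{i_1}+\dots+N_{i_r}$, gives a faithful submodule. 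The point is that $r\leq n$ can be arranged: each time we pass from a sub-bimodule $I_j\subsetneq I_{j+1}$ of $\soc(R)$ in a composition series, we only need one more generator $N_i$ to ``separate'' $I_{j+1}$ from $I_j$ in its action on $M$, because $I_{j+1}/I_j$ is a \emph{simple} bimodule and thus any nonzero action of it on $M$ is already a faithful action of that subfactor. Hence $M'$ can be taken with $\lt(M'/J(R)M')\leq n$, and since $M'$ is then generated by $\leq n$ elements (its top is a module of length $\leq n$ over the semisimple ring $R/J(R)$, but more simply it is a sum of $\leq n$ cyclic modules $N_{i_j}$), we are done.

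For (ii), I would dualize, using Lemma~\ref{L.M_as_subdir}(ii) with $E=\soc(M)$: write $\soc(M)=\bigoplus_{i\in I} L_i$ as a direct sum of simple modules, obtaining a subdirect decomposition $M\hookrightarrow\prod_i M_i$ with each $\soc(M_i)=L_i$ simple. Again $\soc(R)$ acts faithfully on $M$, hence faithfully on $\prod_i M_i$, hence (running through the $n$ composition factors of $\soc(R)$ as before) a finite subset $i_1,\dots,i_r$ with $r\leq n$ already gives $\soc(R)$ acting faithfully on $M''=M_{i_1}\times\dots\times M_{i_r}$; but $M''$ is a homomorphic image of $M$ (via the projection of $M\hookrightarrow\prod_i M_i$ onto those coordinates), and $\soc(M'')\subseteq\bigoplus_{j=1}^r\soc(M_{i_j})=\bigoplus_j L_{i_j}$ has length $\leq r\leq n$. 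For (iii), I would simply run the two constructions in succession: first pass to the faithful submodule $M'$ of (i), then apply (ii) to $M'$, obtaining a faithful factor module $M'''$ of $M'$ — a subfactor of $M$ — with $\lt(\soc(M'''))\leq n$; and one checks that the bound $\lt(M'''/J(R)M''')\leq n$ survives, because $M'''$ is a quotient of $M'$ so its top is a quotient of $M'/J(R)M'$, of length $\leq n$. The main obstacle I anticipate is the bookkeeping that produces $r\leq n$ rather than merely $r<\infty$: one must argue carefully that separating the action of \emph{one more} simple bimodule subfactor of $\soc(R)$ costs \emph{at most one} additional generator (resp.\ one additional subdirect factor), which rests on the simplicity of that subfactor together with the fact that a nonzero bimodule map out of a simple bimodule is injective. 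Everything else is an assembly of the two lemmas plus the standard fact that an ideal of a left Artinian ring is zero iff it meets $\soc(R)$ trivially.
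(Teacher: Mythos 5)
Your overall architecture is exactly the paper's: Lemma~\ref{L.M_as_sum}(ii) for part (i), the final statement of Lemma~\ref{L.M_as_subdir} for part (ii), composition of the two constructions for part (iii), and in each case the observation that a two-sided ideal of a left Artinian ring meeting $\soc(R)$ trivially is zero, so that only faithfulness of the action of $\soc(R)$ needs to be preserved. Parts (ii) and (iii) go through, modulo the same issue as in (i), discussed next.

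The gap is in the step you yourself flag as the crux: the claim that a \emph{fixed} composition series $0=I_0\subset I_1\subset\dots\subset I_n=\soc(R)$ can be handled one factor at a time, a single $N_i$ sufficing to ``separate'' $I_{j+1}$ from $I_j.$ What that requires is an index $i$ with $\mathrm{Ann}(N_i)\cap I_{j+1}\subseteq I_j;$ but faithfulness of $M=\sum_i N_i$ only gives $\bigcap_i\bigl(\mathrm{Ann}(N_i)\cap I_{j+1}\bigr)=0,$ and an intersection of sub-bimodules of $I_{j+1}$ lying inside $I_j$ does not force any single one of them to lie inside $I_j$: although $I_j$ is maximal in $I_{j+1},$ it need not be meet-irreducible. (For instance, if $I_{j+1}$ is a direct sum of two isomorphic simple bimodules and $I_j$ is a diagonal copy, the two coordinate sub-bimodules intersect in $0\subseteq I_j$ while neither is contained in $I_j.)$ Your appeal to ``a nonzero bimodule map out of a simple bimodule is injective'' merely restates the desired containment; it does not produce the witness. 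The repair is to drop the fixed composition series and choose the $N_{i_j}$ greedily: as long as $N_{i_1}+\dots+N_{i_j}$ is annihilated by some nonzero sub-bimodule $I\subseteq\soc(R),$ pick $N_{i_{j+1}}$ not annihilated by $I.$ The annihilators in $\soc(R)$ of the successive partial sums then form a strictly decreasing chain of sub-bimodules of a bimodule of length $n,$ hence reach $0$ after at most $n$ steps. This is what the paper does, and the same greedy argument repairs the dual counting step in part (ii).
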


\begin{proof}

To get~(i), note that since $R/J(R)$ is semisimple Artin,
$M/J(R)M$ can be written as a direct sum of simple
modules $L_i$ $(i\in I)$ over that ring, and hence over $R,$
so we can construct a generating family of submodules
$N_i\subseteq M$ related to these as in Lemma~\ref{L.M_as_sum}(ii).
Since $M=\sum_i N_i$ is faithful, and $\soc(R)$ has length $n$ as a
$\!2\!$-sided ideal, the sum of some family of $\leq n$
of these submodules, say
\begin{equation}\begin{minipage}[c]{35pc}\label{d.M'}
$M'\ =\ N_{i_1}+\dots+N_{i_m}$\quad where $m\leq n,$
\end{minipage}\end{equation}
must have the property that $M'$ is annihilated by no
nonzero subideal of $\soc(R).$
(Details: one chooses the $N_{i_j}$ recursively;
as long as $N_{i_1}+\dots+N_{i_j}$ is annihilated by a nonzero
subideal $I\subseteq\soc(R),$ one can choose an $N_{i_{j+1}}$
which fails to be annihilated by $I.$
The annihilators in $\soc(R)$ of successive
sums $M'=N_{i_1}+\dots+N_{i_j}$ $(j=0,1,\dots)$
form a strictly decreasing chain, so this chain
must terminate after $\leq n$ steps.)
Since an ideal of an Artinian ring having zero intersection
with the socle is zero, $M'$ has zero annihilator, i.e., is faithful.
Since each $N_i$ satisfies $\lt(N_i/J(R)N_i)=1,$
we have $\lt(M'/J(R)M')\leq m\leq n.$

Statement (ii) is proved in the analogous way
from the final statement of Lemma~\ref{L.M_as_subdir}, using
images of $M$ in products of finite subfamilies
of the $M_i$ in place of submodules of $M$
generated by finite subfamilies of the $N_i.$

Statement (iii) follows by combining~(i) and~(ii).
\end{proof}

\section{Acknowledgements}\label{S.ackn}
I am indebted to Luchezar Avramov
for pointing me to Gulliksen's \cite[Lemma~2]{TG},
to Cl\'{e}ment de Seguins Pazzis for providing,
at \cite{overflow}, the proof of the present version
of~Proposition~\ref{P.cm_bilin}, and to Pace Nielsen
for many helpful comments on an earlier draft of this note.

\end{document}